\documentclass{amsart}
\usepackage{verbatim}
\newcommand{\cal}{\mathcal}

\newcommand{\rd}{{\mathbb R}^d}

\newcommand{\R}{{\mathbb R}}
\newcommand{\N}{{\mathbb N}}

\newcommand{\Ha}{{\cal H}}
\newcommand{\bd}{\partial}

\newcommand{\eps}{\varepsilon}
\newcommand{\sL}{\cal{L}}
\newcommand{\sS}{\cal{S}}
\newcommand{\sM}{\cal{M}}
\newcommand{\usS}{\overline{\cal S}}
\newcommand{\usM}{\overline{\cal M}}
\newcommand{\lsS}{\underline{\cal S}}
\newcommand{\lsM}{\underline{\cal M}}
\newcommand{\ldim}{\underline{\dim}}
\newcommand{\udim}{\overline{\dim}}
\newcommand{\conv}{{\rm conv}}

\newtheorem{Theorem}{Theorem}
\newtheorem{Proposition}[Theorem]{Proposition}

\newtheorem{question}[Theorem]{Question}
\newtheorem{Remark}[Theorem]{Remark}

\numberwithin{equation}{section}
\numberwithin{Theorem}{section}

\begin{document}
\title[Characterization of Minkowski measurability]{Characterization of Minkowski measurability in terms of surface area}
\author{Jan Rataj}
\address{Charles University, Faculty of Mathematics and Physics, Sokolovsk\'a 83, 18675 Praha 8, Czech Republic}
\author{Steffen Winter}
\address{Karlsruhe Institute of Technology, Department of Mathematics, 76128 Karlsruhe, Germany}
\thanks{Acknowledgement: The authors were supported by a cooperation grant of the Czech and the German science foundation, GACR project no.\ P201/10/J039 and DFG project no.\ WE 1613/2-1.}
\date{\today}
\subjclass[2000]{28A75, 28A80, 28A12, 26A51}
\keywords{parallel set, surface area, Minkowski content, Minkowski dimension, gauge function, Kneser function}
\begin{abstract}
The $r$-parallel set to a set $A$ in Euclidean space consists of all points with distance at most $r$ from $A$.
Recently, the asymptotic behaviour of volume and surface area of the parallel sets as $r$ tends to $0$ has been studied
and some general results regarding their relations have been established. Here we complete the picture regarding the resulting notions of Minkowski content and S-content.
In particular, we show that a set is Minkowski measurable if and only if it is S-measurable, i.e.\ if and only if its S-content is positive and finite, and that positivity and finiteness of the
lower and upper Minkowski contents imply the same for the S-contents and vice versa. The results are formulated in the
more general setting of Kneser functions. Furthermore, the relations between Minkowski and S-contents are studied for more general gauge functions. The results are applied to simplify the proof of the Modified Weyl-Berry conjecture in dimension one.
\end{abstract}
\maketitle

\section{Introduction}

Let $A$ be a bounded subset of $\rd$ and $r>0$. Denote by $d_A$ the (Euclidean) distance function of the set $A$, and by
$$
A_r:=\{ z\in\rd:\, d_A(z)\leq r\}
$$
 its $r$-parallel set (or  $r$-parallel neighbourhood). For $t\ge 0$, denote by ${\cal H}^{t}$ the $t$-dimensional Hausdorff measure. Let $V_A(r)={\cal H}^d(A_r)$ be the volume of the $r$-parallel set.

Stach\'o \cite{Stacho} showed (using the results of Kneser \cite{Kneser}) that the derivative $(V_A)'(r)$ of $V_A(r)$ exists for all $r>0$ except countably many and that the left and right derivatives exist at any $r>0$ and satisfy $(V_A)'_-(r)\geq (V_A)'_+(r)$.  Combining some results of Stach\'o~\cite{Stacho} and  Hug, Last and Weil \cite{HLW04}, it was shown in \cite{rw09} using a rectifiability argument that, for all $r>0$ except countably many,  the relation
\begin{equation} \label{vol_deriv}
V_A'(r) ={\cal H}^{d-1}(\partial A_r)
\end{equation}
holds.

In \cite{rw09}, also the limiting behaviour of $V_A(r)$ and ${\cal H}^{d-1}(\partial A_r)$ as $r\to 0$ was studied for arbitrary bounded sets $A\subset \R^d$
 and some close relations were established between the resulting notions of Minkowski content and S-content.

Recall that the \emph{$s$-dimensional lower and upper Minkowski content} of a compact set $A\subset\R^d$ are defined by
\[
\underline{\cal M}^s(A):=\liminf_{r\to 0} \frac{V_A(r)}{\kappa_{d-s}r^{d-s}} \quad \text{ and } \quad
\overline{\cal M}^s(A):=\limsup_{r\to 0} \frac{V_A(r)}{\kappa_{d-s}r^{d-s}},
\]
where $\kappa_t:=\pi^{t/2}/\Gamma(1+\frac t2)$. (If $t$ is an integer, then $\kappa_t$ is the volume of a unit $t$-ball).
If $\underline{\cal M}^s(A)=\overline{\cal M}^s(A)$, then the common value ${\cal M}^s(A)$ is refered to as the \emph{$s$-dimensional Minkowski content} of $A$. If the Minkowski content ${\cal M}^s(A)$ exists and is positive and finite, then the set $A$ is called \emph{($s$-dimensional) Minkowski measurable}.
We denote by
\[
\underline{\dim}_M A:=\inf\{t\ge 0 : \underline{\cal M}^s(A)=0\}=\sup\{t\ge 0 :\underline{\cal M}^s(A)=\infty\}
\]
and
\[
\overline{\dim}_M A=\inf\{t\ge 0 :\overline{\cal M}^s(A)=0\}=\sup\{t\ge 0 :\overline{\cal M}^s(A)=\infty\}
\]
the \emph{lower} and \emph{upper Minkowski dimension} of $A$.
Minkowski measurability plays an important role for instance in connection with the Weyl-Berry conjecture, see Section~\ref{sec:one-dim}.

In analogy with the Minkowski content, the \emph{upper and lower S-content} (or \emph{surface area based content}) of $A$ was introduced in \cite{rw09}, for $0\leq s<d$, by
\[
\underline{\cal S}^s(A):=\liminf_{r\to 0} \frac{{\cal H}^{d-1}(\bd A_r)}{(d-s)\kappa_{d-s}r^{d-1-s}}
\quad \text{ and } \quad
\overline{\cal S}^s(A):=\limsup_{r\to 0} \frac{{\cal H}^{d-1}(\bd A_r)}{(d-s)\kappa_{d-s}r^{d-1-s}},
\]
respectively. If both numbers coincide, the
common value is denoted by ${\cal S}^s(A)$ and called ($s$-dimensional) \emph{S-content} of $A$. It is convenient to set ${\cal S}^d(A):=0$ for completeness, which is justified by the fact that $\lim_{r\to 0}r{\cal H}^{d-1}(\bd A_r)=0$ for arbitrary bounded sets $A\subset\R^d$, cf.~\cite{Kneser}.
If ${\cal S}^s(A)$ exists and is positive and finite, then we call the set $A$ ($s$-dimensional) \emph{S-measurable}.
The numbers
\[
\underline{\dim}_S A:=\sup\{0\le t\le d: \underline{\cal S}^t(A)=\infty\}=\inf\{0\le t\le d: \underline{\cal S}^t(A)=0\}
\]
and
\[
\overline{\dim}_S A:=\sup\{0\le t\le d: \overline{\cal S}^t(A)=\infty\}=\inf\{0\le t\le d: \overline{\cal S}^t(A)=0\}
\]
are the \emph{lower and upper surface area based dimension} or \emph{S-dimension} of $A$, respectively.  Obviously, $\underline{\dim}_S A\le \overline{\dim}_S A$, and if equality holds, the common value will be regarded as the \emph{surface area based dimension} (or \emph{S-dimension}) of $A$ and denoted by $\dim_S A$.

In view of equation \eqref{vol_deriv}, it is apparent that Minkowski contents and S-contents of a set $A$ must be closely related. In \cite{rw09}, some precise results regarding this relation have been obtained, which are summarized as follows.
\begin{Theorem}\label{main-rw09} \cite[Corollaries~3.2, 3.4, 3.6 and Proposition 3.7]{rw09}\\
Let $A\subset\R^d$ be bounded and $s\in[0,d]$. Then
\begin{align} \label{eq:ucont}
 \frac{d-s}{d} \usS^s(A)\leq \usM^s(A)\leq \usS^s(A),
\end{align}
where for $s=d$ the left inequality is trivial and the right inequality holds only in case $V_A(0)=0$. As a consequence, $$\udim_M A= \udim_S A\,,$$ whenever $V_A(0)=0$. Furthermore,
 \begin{align}\label{eq:lcont}
\lsS^s(A)\leq \lsM^s(A)\leq c_{d,s} \left[\lsS^{s\frac{d-1}{d}}(A)\right]^\frac{d}{d-1},
\end{align}
    where for the right hand inequality we assume $d>1$ and where the constant $c_{d,s}$ just depends on the dimensions $s$ and $d$. As a consequence,
    \begin{align}\label{eq:ldim}
    \ldim_S A\le \ldim_M A \le \frac{d}{d-1} \ldim_S A.
    \end{align}
\end{Theorem}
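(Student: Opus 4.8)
The plan is to reduce all four assertions to two elementary facts about parallel sets, each valid for every $r>0$. The first is the integral representation
\[
V_A(r)=V_A(0)+\int_0^r\Ha^{d-1}(\bd A_\rho)\,d\rho ,
\]
which follows from \eqref{vol_deriv} and the local absolute continuity of $V_A$ (it is locally Lipschitz on $(0,\infty)$, being a Kneser function, cf.\ \cite{Stacho,Kneser}), together with the Kneser monotonicity of $V_A$, namely that $\rho\mapsto(V_A(\rho)-V_A(0))/\rho^d$ is non-increasing; differentiating the latter and using $\Ha^{d-1}(\bd A_r)\le(V_A)'_-(r)$ gives, for every $r>0$,
\[
V_A(r)\ \ge\ \frac{r}{d}\,\Ha^{d-1}(\bd A_r).
\]
The second is the isoperimetric inequality applied to the parallel set itself: $A_r$ is bounded and closed with $\Ha^{d-1}(\bd A_r)<\infty$, so its measure-theoretic boundary lies in $\bd A_r$ and $P(A_r)\le\Ha^{d-1}(\bd A_r)<\infty$; hence, for $d>1$ and every $r>0$,
\[
V_A(r)^{\frac{d-1}{d}}\ \le\ \frac{1}{d\,\kappa_d^{1/d}}\,\Ha^{d-1}(\bd A_r).
\]

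I would first dispose of the ``trivial'' bounds. Dividing $V_A(r)\ge\frac rd\Ha^{d-1}(\bd A_r)$ by $\kappa_{d-s}r^{d-s}$ gives $\frac{V_A(r)}{\kappa_{d-s}r^{d-s}}\ge\frac{d-s}{d}\cdot\frac{\Ha^{d-1}(\bd A_r)}{(d-s)\kappa_{d-s}r^{d-1-s}}$ for all $r$, and $\limsup$ resp.\ $\liminf$ as $r\to0$ give $\frac{d-s}{d}\usS^s(A)\le\usM^s(A)$ and $\frac{d-s}{d}\lsS^s(A)\le\lsM^s(A)$ (vacuous for $s=d$). The sharper $\lsS^s(A)\le\lsM^s(A)$ uses the integral representation: for $\ep>0$ and $\rho$ small one has $\Ha^{d-1}(\bd A_\rho)\ge(\lsS^s(A)-\ep)(d-s)\kappa_{d-s}\rho^{d-1-s}$, and integrating (discarding $V_A(0)\ge0$) yields $V_A(r)\ge(\lsS^s(A)-\ep)\kappa_{d-s}r^{d-s}$, hence $\lsM^s(A)\ge\lsS^s(A)$. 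Symmetrically, if $V_A(0)=0$ the eventual upper bound on $\Ha^{d-1}(\bd A_\rho)$ integrates from $0$ to give $\usM^s(A)\le\usS^s(A)$; if instead $V_A(0)>0$ and $s<d$ the term $V_A(0)/(\kappa_{d-s}r^{d-s})\to\infty$ shows this last half genuinely requires $V_A(0)=0$. This establishes \eqref{eq:ucont} and the left inequality of \eqref{eq:lcont}.

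For the right inequality of \eqref{eq:lcont} (where $d>1$), set $t:=s\frac{d-1}{d}$, so that $(d-s)\frac{d-1}{d}=d-1-t$. Raising $\frac{V_A(r)}{\kappa_{d-s}r^{d-s}}$ to the power $\frac{d-1}{d}$ and applying the isoperimetric inequality gives, for every $r>0$,
\[
\Big(\frac{V_A(r)}{\kappa_{d-s}r^{d-s}}\Big)^{\frac{d-1}{d}}
\ \le\ \frac{(d-t)\kappa_{d-t}}{d\,\kappa_d^{1/d}\,\kappa_{d-s}^{(d-1)/d}}\cdot\frac{\Ha^{d-1}(\bd A_r)}{(d-t)\kappa_{d-t}\,r^{d-1-t}} .
\]
As $x\mapsto x^{(d-1)/d}$ is an increasing homeomorphism of $[0,\infty]$, it commutes with $\liminf$; letting $r\to0$ gives $\big(\lsM^s(A)\big)^{(d-1)/d}\le\frac{(d-t)\kappa_{d-t}}{d\kappa_d^{1/d}\kappa_{d-s}^{(d-1)/d}}\lsS^{t}(A)$, and raising to the power $\frac{d}{d-1}$ produces exactly $\lsM^s(A)\le c_{d,s}\big[\lsS^{s(d-1)/d}(A)\big]^{d/(d-1)}$ with $c_{d,s}=\kappa_{d-s}^{-1}\big[(d-t)\kappa_{d-t}/(d\kappa_d^{1/d})\big]^{d/(d-1)}$, a constant depending only on $d$ and $s$. (One checks equality holds for $A$ a ball and $s=d$, where the isoperimetric step is itself an equality, so the bound is sharp.)

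The dimension statements are then bookkeeping. When $V_A(0)=0$, $\usM^s(A)\le\usS^s(A)$ gives $\udim_M A\le\udim_S A$, and the left inequality of \eqref{eq:ucont} gives $\usM^s(A)=0\Rightarrow\usS^s(A)=0$ for $s<d$, hence $\udim_S A\le\udim_M A$, so the two coincide. Likewise $\lsS^s(A)\le\lsM^s(A)$ gives $\ldim_S A\le\ldim_M A$, while $\lsM^s(A)\le c_{d,s}[\lsS^{s(d-1)/d}(A)]^{d/(d-1)}$ forces $\lsM^s(A)=0$ whenever $s\frac{d-1}{d}>\ldim_S A$, i.e.\ $\ldim_M A\le\frac{d}{d-1}\ldim_S A$; this is \eqref{eq:ldim}. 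The only real obstacle in the whole scheme is legitimising the isoperimetric step — that $A_r$ is a set of finite perimeter with $P(A_r)\le\Ha^{d-1}(\bd A_r)<\infty$ — which rests on the rectifiability of parallel sets underlying \eqref{vol_deriv} (\cite{Stacho,HLW04,rw09}); a minor recurring technicality elsewhere is the split between the cases $V_A(0)=0$ and $V_A(0)>0$.
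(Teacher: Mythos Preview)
The paper does not supply its own proof of this theorem; it is quoted from \cite{rw09}. Your argument is correct and, as far as one can tell from the fragments the present paper reuses, reproduces the approach of \cite{rw09}: the inequalities $\lsS^s\le\lsM^s$ and (when $V_A(0)=0$) $\usM^s\le\usS^s$ obtained by integrating the bound on $\Ha^{d-1}(\bd A_\rho)$ are exactly the specialization $h(r)=r^{d-s}$ of the L'H\^opital-type Proposition~\ref{prop-rw09}; the pointwise bound $V_A(r)\ge\frac{r}{d}\Ha^{d-1}(\bd A_r)$ behind $\frac{d-s}{d}\usS^s\le\usM^s$ is precisely the Kneser estimate the paper itself re-derives as \eqref{eqn:f_}--\eqref{eqn:bound}; and the isoperimetric inequality (with Federer's criterion guaranteeing $A_r$ has finite perimeter, since $\Ha^{d-1}(\bd A_r)\le(V_A)'_-(r)<\infty$) is the natural --- indeed the only apparent --- source of the exponent $d/(d-1)$ and the dimension shift $s\mapsto s\frac{d-1}{d}$ in \eqref{eq:lcont}. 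The dimension statements are, as you say, bookkeeping.
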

Note that there is a fundamental difference between upper and lower contents. While the upper contents differ at most by a positive constant implying in particular the equivalence of the upper dimensions, the lower Minkowski content is in general only bounded from above by an S-content of some different dimension. This allows different lower dimensions. It was shown in \cite{w10}, that there exist indeed sets for which lower Minkowski dimension and lower S-dimension are different. Moreover, the constants given in \eqref{eq:ldim} were shown to be optimal.

In this note, we complete the picture concerning the relations between Minkowski contents and S-contents. We show that the existence of the Minkowski content (as a positive and finite number) is equivalent to the existence of the corresponding S-content, and that both numbers coincide in this case. In particular, this allows to characterize Minkowski measurability in terms of S-measurability. Moreover, while the positivity of lower $s$-dimensional Minkowski contents is in general not enough to conclude the positivity of the corresponding S-content, we show that the assumption of both positivity and finiteness of the upper and lower $s$-dimensional Minkowski contents is sufficient for the corresponding (upper and lower) S-contents to be positive and finite as well. Hence two sided bounds for Minkowski contents imply two sided bounds for the S-contents and vice versa, see Section~\ref{sec:two-side}.

In Section~\ref{sec:general-gauge}, we study also contents with more general gauge functions. Motivated by an open question regarding the asymptotics of area and boundary length of the parallel sets of the Brownian path in $\R^2$, we study generalized Minkowski contents, where the dimensions $s$ (corresponding to the gauge functions $h(r)=r^{d-s}$) are replaced by more general gauge functions $h:(0,\infty)\to(0,\infty)$ (e.g.\ $h(r)=|\log r|^{-1}$ as in the case of the Brownian path in the plane, cf.~\cite{LeGall88}). Generalized Minkowski contents are known in the literature, see e.g.~\cite{LapHe1,LapHe2,tricot,Zu05}.
We introduce the corresponding generalization of the S-contents and extend the relations between Minkowski and S-contents to the generalized counterparts. In particular, we show for a large class of gauge functions that the existence of the generalized Minkowski content $\sM(h;A)$ is equivalent to the existence of the corresponding generalized S-content $\sS(h';A)$, where $h'$ is the derivative of $h$ provided $h$ is differentiable.
Although our results cover a large class of gauge functions, unfortunately, they do not cover the case of the Brownian path. Our methods which are based on the Kneser property do not apply in this particular case. In Section~\ref{sec:ex}, some examples of  Kneser functions are constructed which indicate that the expected relation between the generalized contents may actually fail. At least they show that the underlying result for Kneser functions is not valid in this case.

Finally, in Section~\ref{sec:one-dim}, we study subsets of $\R$ and demonstrate how the results in this note can be used to simplify some essential parts of the proof derived by Lapidus and Pomerance in \cite{LapPo93} of the Modified Weyl-Berry conjecture in dimension one.

\section{Two sided bounds} \label{sec:two-side}

In \cite{rw09}, we considered the relation between either the two upper contents or the two lower contents, i.e.\ we tried to establish one-sided bounds, in which we succeeded in the case of the upper contents, but which turned out to be impossible for the lower contents, see Theorem~\ref{main-rw09} above. While the lower S-content is always a lower bound for the lower Minkowski content, cf.~\eqref{eq:lcont}, it is impossible to bound the lower S-content from below by the lower Minkowski content, if nothing is known about the upper Minkowski content. Even the lower S-dimension can be strictly smaller than the lower Minkowski dimension.
Now we consider upper and lower contents together. Assuming both lower and upper Minkowski content (of the same dimension $D$) to be positive and finite, one can conclude the same for the S-contents. We will derive this from a statement on Kneser functions formulated in Proposition~\ref{lem:two-side} below. Recall that a function $f:(0,\infty)\to(0,\infty)$ is called \emph{Kneser function of order $d\ge 1$}, if for all $0<a\le b<\infty$ and $\lambda\geq 1$,
$$
f(\lambda b)- f(\lambda a)\leq \lambda^d \left( f(b)-f(a)\right)\,.
$$

 Stach\'o observed that for a Kneser function $f$ of order $d\ge 1$, the function $f'_+(r) r^{1-d}$ is monotone decreasing (cf.~\cite[Theorem 1]{Stacho}). It is not difficult to see that the same is true for the function $f'_-(r) r^{1-d}$. That is, for $0<t\le r$,
\begin{equation} \label{eqn:decreasing}
\frac{f'_-(t)}{t^{d-1}}\ge \frac{f'_-(r)}{r^{d-1}} \text{ and } \frac{f'_+(t)}{t^{d-1}}\ge \frac{f'_+(r)}{r^{d-1}}.
\end{equation}
Note that a certain converse to Proposition~\ref{lem:two-side} was formulated in \cite[Proposition~3.1]{rw09}, see also
Proposition~\ref{prop-rw09} below.
\begin{Proposition} \label{lem:two-side}
Let $f$ be a Kneser function of order $d\ge1$. If
\begin{equation}\label{eqn:two-side1}
0<\liminf_{r\to 0} \frac{f(r)}{r^s}\le\limsup_{r\to 0} \frac{f(r)}{r^s}<\infty
\end{equation}
for some $s\in(0,\infty)$, then
$$
0<\liminf_{r\to 0} \frac{f'_+(r)}{r^{s-1}}\le \limsup_{r\to 0} \frac{f'_-(r)}{r^{s-1}}<\infty.
$$
\end{Proposition}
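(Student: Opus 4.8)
The plan is to exploit the monotonicity statement \eqref{eqn:decreasing} to convert the hypothesis on $f(r)/r^s$ into bounds on $f'_\pm(r)/r^{s-1}$. For the \emph{upper} bound, suppose for contradiction that $\limsup_{r\to 0} f'_-(r)/r^{s-1}=\infty$. Then there is a sequence $t_k\to 0$ with $f'_-(t_k)/t_k^{s-1}\to\infty$; set $M_k:=f'_-(t_k)$. By \eqref{eqn:decreasing}, for every $t\le t_k$ we have $f'_-(t)\ge M_k t^{d-1}/t_k^{d-1}$, hence integrating from $0$ to $t_k$ (using that $f$ is increasing, nonnegative, and $f(0^+)\ge 0$, so that $f(t_k)\ge \int_0^{t_k} f'_-(t)\,dt$ by the monotonicity of $f'_-$ and the fundamental theorem of calculus for monotone functions) gives
\[
f(t_k)\ \ge\ \int_0^{t_k} \frac{M_k}{t_k^{d-1}}\,t^{d-1}\,dt\ =\ \frac{M_k t_k}{d}.
\]
Dividing by $t_k^s$ yields $f(t_k)/t_k^s\ge \tfrac1d\, M_k/t_k^{s-1}\to\infty$, contradicting \eqref{eqn:two-side1}. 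The same argument with $f'_+$ in place of $f'_-$ handles the right endpoint of the desired chain; note $f'_+\le f'_-$, so bounding $f'_-$ from above automatically bounds $f'_+$ from above, which is the part we actually need combined with the lower bound below.

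For the \emph{lower} bound, suppose $\liminf_{r\to 0} f'_+(r)/r^{s-1}=0$, so there is $r_k\to 0$ with $\varepsilon_k:=f'_+(r_k)/r_k^{s-1}\to 0$. Now \eqref{eqn:decreasing} runs the other way: for $r\ge r_k$ we get $f'_+(r)\le f'_+(r_k) r^{d-1}/r_k^{d-1}=\varepsilon_k r^{d-1}/r_k^{s-1}\cdot r_k^{1-d}\cdot r_k^{d-1}$, i.e. $f'_+(r)\le \varepsilon_k\, r_k^{1-s}\, r^{d-1}$ on $[r_k,b]$ for any fixed $b$. Integrating from $r_k$ to $b$,
\[
f(b)-f(r_k)\ \le\ \varepsilon_k\, r_k^{1-s}\,\frac{b^d-r_k^d}{d}\ \le\ \varepsilon_k\, r_k^{1-s}\,\frac{b^d}{d}.
\]
Pick any fixed $b>0$; then $f(b)$ is a fixed finite number and $f(r_k)\ge 0$, so $f(b)\le \varepsilon_k r_k^{1-s} b^d/d$ for all $k$. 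If $s\ge 1$ this forces the right side to $0$ (since $r_k^{1-s}\le$ const for $r_k$ small when $s\le 1$, or one instead compares $f(r_k)$ at two scales) — to make this clean I would instead rescale: apply the same estimate on $[r_k, 2 r_k]$, giving $f(2r_k)-f(r_k)\le \varepsilon_k r_k^{1-s}(2^d-1)r_k^d/d = C\varepsilon_k r_k^{d+1-s}$, whence $f(2r_k)/(2r_k)^s \le 2^{-s} f(r_k)/r_k^s + C'\varepsilon_k r_k^{d+1-2s}\cdot$(bounded factor); combined with the lower bound $f(r_k)/r_k^s\ge c>0$ from \eqref{eqn:two-side1} and iterating the doubling $m$ times one sees $f(2^m r_k)/(2^m r_k)^s$ must also stay bounded below, yet summing the increments telescopes to show $f(r_k)/r_k^s\to 0$ along an appropriate subsequence, contradicting the lower bound in \eqref{eqn:two-side1}.

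The main obstacle is making the lower-bound argument fully rigorous: unlike the upper bound, where a single integration from $0$ suffices, here one must transport the smallness of $f'_+(r_k)/r_k^{s-1}$ at scale $r_k$ to a contradiction with the lower bound $\liminf f(r)/r^s>0$, and the natural integration goes toward larger $r$, where one has less control. The cleanest route is a scaling/telescoping argument over dyadic scales $2^m r_k$ as sketched, using \eqref{eqn:decreasing} on each dyadic annulus to keep the derivative small relative to the local scale; one should also verify the mild measure-theoretic points (that $f$, being Kneser hence locally Lipschitz, satisfies $f(b)-f(a)=\int_a^b f'_\pm$, and that $f(0^+)$ exists and is $\ge 0$) which are standard consequences of the Kneser property. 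I expect no difficulty with the upper bound and would present it first as a warm-up for the symmetric but more delicate lower bound.
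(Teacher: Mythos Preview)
Your upper-bound argument is correct and is essentially the paper's: integrating the inequality $f'_-(t)\ge f'_-(r)(t/r)^{d-1}$ over $[0,r]$ (the limiting case $b\to 0$ of the paper's interval $[br,r]$) gives $f(r)\ge f'_-(r)r/d$, hence $f'_-(r)/r^{s-1}\le d\,f(r)/r^s$, which is bounded.

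Your lower-bound argument, however, has a genuine gap. First, there are exponent slips: from $f'_+(r_k)=\varepsilon_k r_k^{s-1}$ and \eqref{eqn:decreasing} one gets $f'_+(r)\le \varepsilon_k r_k^{\,s-d}\,r^{d-1}$ for $r\ge r_k$ (not $r_k^{1-s}$), and the increment over $[r_k,2r_k]$ is $\le C\varepsilon_k r_k^{\,s}$ (not $r_k^{d+1-s}$). More importantly, the telescoping you sketch does not yield ``$f(r_k)/r_k^s\to 0$'': summing the increments gives
\[
\frac{f(r_k)}{r_k^s}\ \ge\ 2^{ns}\,\frac{f(2^n r_k)}{(2^n r_k)^s}\ -\ \frac{\varepsilon_k}{d}\,2^{nd},
\]
which pushes $f(r_k)/r_k^s$ \emph{up}, not down; if anything you would contradict the \emph{upper} bound in \eqref{eqn:two-side1}, not the lower one. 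And letting $n\to\infty$ is delicate because of the competing factor $2^{nd}$.

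The fix is precisely the paper's idea, and it removes the need for any iteration: take a \emph{single fixed} dilation $a>1$ chosen so large that $ma^s>M$, where $m,M$ are the constants with $m r^s\le f(r)\le M r^s$ for small $r$. Then the two-sided hypothesis gives a \emph{positive} lower bound on the increment,
\[
f(ar)-f(r)\ \ge\ m(ar)^s - M r^s\ =\ (ma^s-M)\,r^s\ >\ 0,
\]
while \eqref{eqn:decreasing} (now used for $t\ge r$) gives the upper bound
\[
f(ar)-f(r)=\int_r^{ar} f'_+(t)\,dt\ \le\ f'_+(r)\,r\,\frac{a^d-1}{d}.
\]
Combining these two yields $f'_+(r)/r^{s-1}\ge d(ma^s-M)/(a^d-1)>0$ for all small $r$. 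The point you were missing is that the lower bound on $f'_+$ requires using \emph{both} halves of \eqref{eqn:two-side1} simultaneously to force $f(ar)-f(r)$ to be strictly positive of order $r^s$; a contradiction argument starting only from ``$f'_+(r_k)/r_k^{s-1}$ is small'' cannot succeed without this input.
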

\begin{proof}
It follows from the assumption that there exist $0<m<M<\infty$ and $r_0>0$ so that, for all $r\in(0,r_0)$,
\begin{align} \label{eqn:assumption}
m r^s \le f(r) \le M r^s.
\end{align}
Choose $a>1$ so that $m a^s-M>0$. By \eqref{eqn:decreasing}, we have for any $r>0$
$$
f(ar)-f(r)=\int_r^{ar} f'_+(t) dt \le \int_r^{ar} f'_+(r)\left(\frac tr\right)^{d-1} dt=f'_+(r) r \frac{a^{d}-1}{d}.
$$
Moreover, we conclude from \eqref{eqn:assumption} that, for $ar\le r_0$,
$$
f(ar)-f(r)\ge (m a^s-M) r^s.
$$
Combining the last two inequalities, we obtain
$$
\frac{f'_+(r)}{r^{s-1}}\ge \frac d{a^d-1} (m a^s -M)>0
$$
for $r\in(0,\frac{r_0}a)$, and thus $\liminf_{r\to 0} \frac{f'_+(r)}{r^{s-1}}>0$.

A similar, slightly simpler argument works for the finiteness of the corresponding limes superior. Choosing some $0<b<1$, we have on the one hand $f(r)-f(br)\le f(r)\le M r^s$, by \eqref{eqn:assumption}, and on the other hand
\begin{equation}\label{eqn:f_}
f(r)-f(br)=\int_{br}^{r} f'_-(t) dt \ge f'_-(r) r \frac{1-b^d}d\,,
\end{equation}
by \eqref{eqn:decreasing}. Combining both inequalities, we get $\frac{f'_-(r)}{r^{s-1}}\le \frac {Md}{1-b^d}$, which implies the finiteness of the limsup.

Note that the first part of the assumption (that the limes inferior for $f$ is positive) is not needed for this conclusion. Since $b$ can be chosen arbitrarily close to $0$ and $M$ arbitrarily close to $\limsup_{r\to 0} \frac{f(r)}{r^s}$, we obtain the relation
\begin{align}\label{eqn:bound}
\limsup_{r\to 0} \frac{f'_-(r)}{r^{s-1}}\le d \limsup_{r\to 0} \frac{f(r)}{r^s}.
\end{align}
\end{proof}
We point out that the statement on the limes superior can also be derived from \cite[Lemma 3.5]{rw09}. The latter is formulated for the volume function $V_A$ but extends to arbitrary Kneser functions (cf.\ also \cite[p.10, first Remark]{rw09}). However, the argument given here is simpler. The constant obtained in \eqref{eqn:bound} is the same.

\begin{Theorem} \label{thm:twoside}
Let $A$ be a bounded subset of $\R^d$ and $D\in[0,d)$. Then
\begin{align}\label{eqn:twoside3}
0<\lsM^D(A)\le\usM^D(A)<\infty
\end{align}
if and only if
\begin{align}\label{eqn:twoside4}
0<\lsS^D(A)\le\usS^D(A)<\infty.
\end{align}
In this case, one has in particular $\dim_M A=\dim_S A=D$.
\end{Theorem}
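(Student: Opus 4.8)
The plan is to deduce Theorem~\ref{thm:twoside} from Proposition~\ref{lem:two-side} applied to the volume function $f=V_A$, together with the known one-sided inequalities from Theorem~\ref{main-rw09}. The key observation is that $V_A$ is a Kneser function of order $d$ (this is precisely Kneser's result, which underlies the whole discussion), and that by \eqref{vol_deriv} one has $V_A'(r)=\Ha^{d-1}(\bd A_r)$ for all but countably many $r>0$, so that the asymptotics of $\Ha^{d-1}(\bd A_r)$ as $r\to 0$ are governed by the one-sided derivatives $(V_A)'_\pm(r)$. Writing $s=d-D$, the S-content quantities $\lsS^D(A)$ and $\usS^D(A)$ are, up to the fixed positive constant $(d-D)\kappa_{d-D}$, exactly $\liminf_{r\to 0}\Ha^{d-1}(\bd A_r)/r^{s-1}$ and $\limsup_{r\to 0}\Ha^{d-1}(\bd A_r)/r^{s-1}$ (with $s-1=d-1-D$).

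For the direction \eqref{eqn:twoside3}$\Rightarrow$\eqref{eqn:twoside4}, I would proceed as follows. Assume $0<\lsM^D(A)\le\usM^D(A)<\infty$. Unravelling the definition of the Minkowski contents, this says exactly that $0<\liminf_{r\to0}V_A(r)/r^s\le\limsup_{r\to0}V_A(r)/r^s<\infty$ (the constant $\kappa_{d-D}=\kappa_s$ being positive and finite). Since $V_A$ is Kneser of order $d$, Proposition~\ref{lem:two-side} applies and yields
\[
0<\liminf_{r\to0}\frac{(V_A)'_+(r)}{r^{s-1}}\le\limsup_{r\to0}\frac{(V_A)'_-(r)}{r^{s-1}}<\infty.
\]
Now I need to transfer these bounds from the one-sided derivatives to $\Ha^{d-1}(\bd A_r)$. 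Because $(V_A)'(r)=\Ha^{d-1}(\bd A_r)$ off a countable set, the $\limsup$ of $\Ha^{d-1}(\bd A_r)/r^{s-1}$ is at most the $\limsup$ of $(V_A)'_-(r)/r^{s-1}$, hence finite; this gives $\usS^D(A)<\infty$. For the lower bound one has to be slightly more careful since the countably many exceptional $r$ could a priori destroy a $\liminf$; but here the monotonicity \eqref{eqn:decreasing} of $t\mapsto (V_A)'_+(t)t^{1-d}$ saves the day: for any $r$ one can find arbitrarily close $r'<r$ at which $V_A$ is differentiable, and $(V_A)'_+(r)r^{1-d}\le (V_A)'_+(r')r'^{1-d}=\Ha^{d-1}(\bd A_{r'})r'^{1-d}$, which combined with $r'\to r$ lets one bound $\liminf \Ha^{d-1}(\bd A_r)/r^{s-1}$ from below by $\liminf (V_A)'_+(r)/r^{s-1}>0$ (after absorbing the ratio $(r'/r)^{d-1}\to1$). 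This gives $\lsS^D(A)>0$, establishing \eqref{eqn:twoside4}.

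For the converse \eqref{eqn:twoside4}$\Rightarrow$\eqref{eqn:twoside3}, I would not need Proposition~\ref{lem:two-side} at all but rather the already-proven inequalities of Theorem~\ref{main-rw09}. Since $D<d$ we have $V_A(0)=0$ automatically (a bounded set of positive volume has Minkowski dimension $d$, so $\usM^D(A)<\infty$ would force $V_A(0)=0$; in any case for $D<d$ this is standard), so \eqref{eq:ucont} gives $\usM^D(A)\le\usS^D(A)<\infty$, and the left inequality of \eqref{eq:ucont} gives $\usM^D(A)\ge\frac{d-D}{d}\usS^D(A)>0$ when $\usS^D(A)>0$ — wait, what we actually need is the lower bound $\lsM^D(A)>0$, which comes from the left inequality of \eqref{eq:lcont}, namely $\lsM^D(A)\ge\lsS^D(A)>0$; and finiteness $\usM^D(A)<\infty$ from \eqref{eq:ucont} as just noted. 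Thus \eqref{eqn:twoside3} holds. Finally, the statement $\dim_M A=\dim_S A=D$ is immediate: \eqref{eqn:twoside3} forces $\ldim_M A=\udim_M A=D$ directly from the definitions of Minkowski dimension, and likewise \eqref{eqn:twoside4} forces $\ldim_S A=\udim_S A=D$.

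The main obstacle I anticipate is the bookkeeping in the forward direction when passing from the one-sided derivatives $(V_A)'_\pm$ to the actual surface area $\Ha^{d-1}(\bd A_r)$: one must handle the countable exceptional set and the fact that Proposition~\ref{lem:two-side} mixes $(V_A)'_+$ in the $\liminf$ with $(V_A)'_-$ in the $\limsup$. The monotonicity relation \eqref{eqn:decreasing} is exactly the tool that reconciles these, by sandwiching values at exceptional points between values at nearby good points, but writing this cleanly requires a little care with the factors $(r'/r)^{d-1}$.
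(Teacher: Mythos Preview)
Your overall strategy coincides with the paper's: apply Proposition~\ref{lem:two-side} to $f=V_A$ for the forward implication, and invoke the inequalities of Theorem~\ref{main-rw09} for the reverse. The one substantive difference lies in how you pass from the one-sided derivatives $(V_A)'_\pm(r)$ to $S_A(r)=\Ha^{d-1}(\bd A_r)$. The paper simply uses the pointwise inequalities
\[
(V_A)'_+(r)\le S_A(r)\le (V_A)'_-(r)\qquad\text{for every }r>0
\]
(due to Stach\'o, cf.\ \cite{Stacho,rw09}), which make the transfer immediate and dissolve what you call the ``main obstacle''. You instead rely only on \eqref{vol_deriv}, i.e.\ equality off a countable set, and try to patch the exceptional points via the monotonicity \eqref{eqn:decreasing}. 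As written this leaves a gap: a countable exceptional set \emph{can} affect a $\limsup$ (or a $\liminf$), and your monotonicity argument controls only $(V_A)'_\pm$ at nearby points, not $S_A$ at the exceptional point itself. In particular, the sentence ``the $\limsup$ of $\Ha^{d-1}(\bd A_r)/r^{s-1}$ is at most the $\limsup$ of $(V_A)'_-(r)/r^{s-1}$'' does not follow from \eqref{vol_deriv} alone. Once you replace \eqref{vol_deriv} by the pointwise sandwich above, both the $\liminf$ and $\limsup$ bounds become one-liners and no bookkeeping with $(r'/r)^{d-1}$ is needed.

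For the reverse implication your use of Theorem~\ref{main-rw09} (second inequality in \eqref{eq:ucont} and first inequality in \eqref{eq:lcont}) is exactly what the paper does. Your attempted justification of $V_A(0)=0$ is circular, as you half-suspect: you deduce it from $\usM^D(A)<\infty$, which is precisely the conclusion you are after in this direction. The paper's proof does not comment on this point.
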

\begin{proof} Assume first that \eqref{eqn:twoside3} holds.
Let $f(r):=V_A(r)$ and recall that $f$ is a Kneser function of order $d$ for each bounded set $A$ in $\R^d$. Set $s:=d-D$. The assumptions imply that $s>0$ and that the hypothesis \eqref{eqn:two-side1} of Proposition~\ref{lem:two-side} is satisfied. Hence, applying this proposition, we obtain
$$
0<\liminf_{r\to 0} \frac{(V_A)'_+(r)}{r^{d-D-1}}\le \limsup_{r\to 0} \frac{(V_A)'_-(r)}{r^{d-D-1}}<\infty.
$$
Now note that $S_A(r)\ge (V_A)'_+(r)$ for each $r>0$, which implies the positivity of $\lsS^D(A)$, and that $S_A(r)\le(V_A)'_-(r)$ for each $r>0$, which allows to conclude the finiteness of $\usS^D(A)$. This proves the implication \eqref{eqn:twoside3} $\Rightarrow$ \eqref{eqn:twoside4}.

The reverse implication follows directly from Theorem~\ref{main-rw09}, more precisely from the second inequality in \eqref{eq:ucont} and the first inequality in \eqref{eq:lcont}.
\end{proof}

Refining the argument in the proof of Proposition~\ref{lem:two-side}, we will now establish, that the existence of the Minkowski content $\sM^D(A)$ of a bounded set $A\subset \R^d$ of Minkowski dimension $D\in[0,d)$ implies the existence of its S-content $\sS^D(A)$. This extends a result in \cite{rh10} (where this was shown for $D\le d-1$) and clarifies and simplifies its proof.
\begin{Proposition} \label{lem:lim-exists}
Let $f$ be a Kneser function of order $d\ge1$. If
\begin{equation}\label{eqn:lim-exists}
\lim_{r\to 0} \frac{f(r)}{r^s}=C
\end{equation}
for some constants $s,C\in(0,\infty)$, then
$$
\lim_{r\to 0} \frac{f'_+(r)}{s r^{s-1}}=\lim_{r\to 0} \frac{f'_-(r)}{s r^{s-1}}=C.
$$
\end{Proposition}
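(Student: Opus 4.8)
The plan is to upgrade the one-sided estimates from the proof of Proposition~\ref{lem:two-side} into matching two-sided asymptotics by exploiting the fact that the hypothesis now gives a genuine limit rather than just finite upper and lower bounds. Fix $\eps>0$; by \eqref{eqn:lim-exists} there is $r_0>0$ with $(C-\eps)r^s\le f(r)\le (C+\eps)r^s$ for all $r\in(0,r_0)$. For $0<b<1<a$ and $r$ small, the Kneser monotonicity \eqref{eqn:decreasing} gives, as in the quoted proof,
\begin{align*}
f'_+(r)\,r\,\frac{a^d-1}{d}&\ge f(ar)-f(r)\ge \big((C-\eps)a^s-(C+\eps)\big)r^s,\\
f'_-(r)\,r\,\frac{1-b^d}{d}&\le f(r)-f(br)\le \big((C+\eps)-(C-\eps)b^s\big)r^s.
\end{align*}
Hence $\liminf_{r\to0} f'_+(r)/r^{s-1}\ge \frac{d}{a^d-1}\big((C-\eps)a^s-(C+\eps)\big)$ and $\limsup_{r\to0} f'_-(r)/r^{s-1}\le \frac{d}{1-b^d}\big((C+\eps)-(C-\eps)b^s\big)$, for every admissible $a,b$ and every $\eps$.

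The heart of the argument is then a limiting computation on these constants. Letting $\eps\to0$ first, I want to show $\sup_{a>1}\frac{d}{a^d-1}(Ca^s-C)=\lim_{a\downarrow1}\frac{d\,C(a^s-1)}{a^d-1}=sC$ and symmetrically $\inf_{0<b<1}\frac{d}{1-b^d}(C-Cb^s)=\lim_{b\uparrow1}\frac{d\,C(1-b^s)}{1-b^d}=sC$, both of which follow from L'Hôpital (or from $(x^s-1)/(x^d-1)\to s/d$ as $x\to1$). This yields
$$
\liminf_{r\to0}\frac{f'_+(r)}{r^{s-1}}\ge sC\quad\text{and}\quad \limsup_{r\to0}\frac{f'_-(r)}{r^{s-1}}\le sC.
$$
Combined with $f'_-(r)\ge f'_+(r)$ (a basic property of Kneser functions, used already for \eqref{vol_deriv}), this sandwiches everything: $sC\le\liminf f'_+(r)/r^{s-1}\le\limsup f'_-(r)/r^{s-1}\le sC$, so $f'_\pm(r)/(sr^{s-1})\to C$ as claimed.

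One technical point needs care: the substitution $f(ar)-f(r)=\int_r^{ar}f'_+(t)\,dt$ and the bound via \eqref{eqn:decreasing} require $ar<r_0$ (respectively $r<r_0$), so the inequalities above are only valid for $r$ in a shrinking interval depending on $a$ (resp.\ $b$); but since $a$ is fixed before the limit $r\to0$ is taken, this causes no difficulty — one simply takes $r<r_0/a$. I expect the main (though still modest) obstacle to be organizing the order of the three limits — $r\to0$, then $\eps\to0$, then $a\downarrow1$ / $b\uparrow1$ — so that each constant bound is legitimately passed to the supremum/infimum; once the quotient limit $(x^s-1)/(x^d-1)\to s/d$ is in hand, the rest is bookkeeping. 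It is worth noting that this argument does not use any upper or lower bound beyond what \eqref{eqn:lim-exists} provides, and that it recovers Proposition~\ref{lem:two-side} as the special case where one only tracks the crude constants without optimizing over $a$ and $b$.
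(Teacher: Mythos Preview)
Your proof is correct and follows essentially the same approach as the paper's: both apply the estimates from the proof of Proposition~\ref{lem:two-side} with $m=C-\eps$, $M=C+\eps$ and then use $(a^s-1)/(a^d-1)\to s/d$ as $a\to1$ (and the analogous limit in $b$) via L'H\^opital. The only, cosmetic, difference is that you let $\eps\to0$ with $a$ fixed and then send $a\to1$, whereas the paper couples $a$ to $\eps$ via $a^s=(C+\sqrt\eps)/(C-\sqrt\eps)$ and lets $\eps\to0$; your ordering is marginally cleaner since the error term $d\eps(a^s+1)/(a^d-1)$ then vanishes trivially rather than requiring a separate L'H\^opital computation.
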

\begin{proof}
Let $\eps\in(0,\min\{1,C\})$. Choose $a>1$ so that
\begin{equation}\label{eqn:choice-of-a}
a^s \ge \frac{C+\sqrt{\eps}}{C-\sqrt{\eps}}.
\end{equation}
Since $\sqrt{\eps}>\eps$ and thus $C+\sqrt{\eps}>C+\eps$ and  $C-\sqrt{\eps}<C-\eps$, we have
$$
(C-\eps)a^s - (C+\eps)>0.
$$
Repeating the argument of the first part of the proof of Proposition~\ref{lem:two-side} with $m:=C-\eps$ and $M:=C+\eps$, we infer that there exists some $r_0=r_0(\eps)$ such that
$$
\frac{f'_+(r)}{r^{s-1}}\ge \frac d{a^d-1} ((C-\eps)a^s - (C+\eps))=dC \frac{a^s-1}{a^d-1}-d\eps\frac{a^s+1}{a^d-1}>0
$$
for each $r\in(0,r_0)$. Hence
$$
\liminf_{r\to 0}\frac{f'_+(r)}{r^{s-1}}\ge dC \frac{a^s-1}{a^d-1}-d\eps\frac{a^s+1}{a^d-1}
$$
for each $\eps\in(0,\min\{1,C\})$ and each $a>1$ satisfying \eqref{eqn:choice-of-a}. Now we choose $a$ such that equality holds in \eqref{eqn:choice-of-a} and let $\eps$ tend to 0. Then $a=a(\eps)$ converges to 1 and so does $a^t$ for each $t\in\R$.
Moreover, for the last term on the right hand side we have
$$
\limsup_{\eps\to 0}d\eps\frac{a^s+1}{a^d-1}\le \lim_{\eps\to 0} \frac{3d\eps}{\left(\frac{C+\sqrt{\eps}}{C-\sqrt{\eps}}\right)^{\frac ds}-1}=0\,,
$$
which is easily seen by applying L'H\^opital's rule. Using again L'H\^opital's rule, we conclude that
$$
\liminf_{r\to 0}\frac{f'_+(r)}{sr^{s-1}}\ge  \frac ds C \lim_{a\searrow 1} \frac{a^s-1}{a^d-1}=\frac ds C \lim_{a\searrow 1} \frac{s a^{s-1}}{da^{d-1}}=C\,.
$$
An analogous argument shows that
$$
\limsup_{r\to 0}\frac{f'_-(r)}{sr^{s-1}}\le C.
$$
(Here one can choose $b=b(\eps)$ such that $b^s=(C-\sqrt{\eps})/(C+\sqrt{\eps})$ and derive the inequality
$$
\limsup_{r\to 0} \frac{f'_-(r)}{r^{s-1}}\le dC \frac{1-b^s}{1-b^d}+ d\eps \frac{1+b^s}{1-b^d}
$$
from which the claim follows by letting again $\eps\to 0$.)

The assertion of the proposition is now obvious taking into account that $f'_+\le f'_-$.
\end{proof}
\begin{Theorem} \label{thm:Mmeas}
Let $A$ be a bounded subset of $\R^d$, $D\in[0,d)$ and $M\in(0,\infty)$. Then
$$
\sM^D(A)=M,
$$
if and only if
$$
\sS^D(A)=M.
$$
That is, the set $A$ is Minkowski measurable (of order $D$) if and only if it is S-measurable (of order $D$)
and in this case both ($D$-dimensional) contents coincide.
\end{Theorem}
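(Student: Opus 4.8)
The plan is to obtain Theorem~\ref{thm:Mmeas} as an almost immediate consequence of Proposition~\ref{lem:lim-exists}, applied to the volume function $f(r):=V_A(r)$, which is a Kneser function of order $d$ for every bounded $A\subset\rd$. Writing $s:=d-D\in(0,d]$ and $C:=M\kappa_s\in(0,\infty)$, and recalling that the normalising factor of the $D$-dimensional Minkowski content is $\kappa_{d-D}r^{d-D}=\kappa_s r^s$ while that of the $D$-dimensional S-content is $(d-D)\kappa_{d-D}r^{d-1-D}=s\kappa_s r^{s-1}$, the statement to be proved is exactly the equivalence
\begin{equation*}
\lim_{r\to0}\frac{f(r)}{r^s}=C
\qquad\Longleftrightarrow\qquad
\lim_{r\to0}\frac{S_A(r)}{s\,r^{s-1}}=C ,
\end{equation*}
where $S_A(r)=\Ha^{d-1}(\bd A_r)$ denotes the surface area of the parallel set.

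For the forward implication I would proceed directly: assuming $\lim_{r\to0}f(r)/r^s=C$, Proposition~\ref{lem:lim-exists} yields $\lim_{r\to0}f'_+(r)/(s r^{s-1})=\lim_{r\to0}f'_-(r)/(s r^{s-1})=C$, and then the pointwise sandwich $f'_+(r)\le S_A(r)\le f'_-(r)$, valid for every $r>0$ and already invoked in the proof of Theorem~\ref{thm:twoside}, forces $\lim_{r\to0}S_A(r)/(s r^{s-1})=C$ as well. This gives $\sM^D(A)=M\Rightarrow\sS^D(A)=M$.

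For the reverse implication I would argue by integration. First one records that $f$ is locally absolutely continuous on $(0,\infty)$: by \eqref{eqn:decreasing} the function $r\mapsto f'_+(r)r^{1-d}$ is nonincreasing, so $f'_+$ is locally bounded on $(0,\infty)$ and $f$ is therefore locally Lipschitz there; by \eqref{vol_deriv} its derivative equals $S_A$ off a countable set, so $f(r)-f(\rho)=\int_\rho^r S_A(t)\,dt$ for $0<\rho<r$. Next, the hypothesis $\sS^D(A)=M$ gives $\usS^D(A)<\infty$, hence $\usM^D(A)<\infty$ by the second inequality in \eqref{eq:ucont} (which is unconditional since $D<d$), hence $V_A(r)\to0$ and $f(\rho)\to0$ as $\rho\to0$; letting $\rho\to0$ (monotone convergence, $S_A\ge0$) we get $f(r)=\int_0^r S_A(t)\,dt$. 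Finally, for $\eps>0$ choose $\delta>0$ with $(C-\eps)s\,t^{s-1}\le S_A(t)\le(C+\eps)s\,t^{s-1}$ on $(0,\delta)$; integrating over $(0,r)$ for $r<\delta$ yields $(C-\eps)r^s\le f(r)\le(C+\eps)r^s$, so $f(r)/r^s\to C$, i.e.\ $\sM^D(A)=M$. Alternatively, this direction can be quoted from the converse statement Proposition~\ref{prop-rw09} taken from \cite{rw09}.

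The main obstacle has already been cleared: the whole analytic difficulty sits in Proposition~\ref{lem:lim-exists} (the $\eps$-dependent choice of $a$ and the two L'H\^opital limits), which the present theorem merely packages together with the elementary sandwich $f'_+\le S_A\le f'_-$ and an elementary integration. The only delicate-looking points, namely ensuring $V_A(0^+)=0$ under the S-measurability hypothesis and transferring the pointwise asymptotics of $S_A$ to those of its primitive $V_A$, are routine once the local absolute continuity of $V_A$ on $(0,\infty)$ and relation \eqref{vol_deriv} are on record.
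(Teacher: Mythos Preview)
Your proof is correct and follows essentially the same route as the paper's: the forward direction is identical (apply Proposition~\ref{lem:lim-exists} to $f=V_A$ with $s=d-D$ and use the sandwich $(V_A)'_+\le S_A\le (V_A)'_-$), while for the reverse direction the paper simply invokes Theorem~\ref{main-rw09} (i.e.\ the inequalities $\lsS^D\le\lsM^D$ and $\usM^D\le\usS^D$), which is exactly the alternative you mention at the end via Proposition~\ref{prop-rw09}. Your self-contained integration argument for $\sS^D(A)=M\Rightarrow\sM^D(A)=M$ is a harmless expansion of that citation and works as written.
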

\begin{proof}
Assume that  $\sM^D(A)=M$. Let $f(r):=V_A(r)$ and $s:=d-D$ and apply Proposition~\ref{lem:lim-exists}. Then $
\sS^D(A)=M
$ follows from the fact that
$(V_A)'_+\le S_A\le (V_A)'_-$. The reverse implication is a direct consequence of Theorem~\ref{main-rw09}.
\end{proof}


\section{General gauge functions}\label{sec:general-gauge}

In the definition of generalized Minkowski contents the renormalization quotient $r^{d-D}$ is replaced by a more general \emph{gauge function} $h$. This allows to characterize the convergence behaviour of the parallel volume on a much finer scale, which is particularly useful, when the ordinary Minkowski content (of the correct dimension) is zero or infinite. Generalized Minkowski contents have been studied for instance in~\cite{LapHe1,LapHe2,tricot,Zu05}.

For a continuous function $h:(0,\infty)\to(0,\infty)$, let
\[
\underline{\cal M}(h;A):=\liminf_{r\to 0} \frac{V_A(r)}{h(r)} \quad \text{ and } \quad
\overline{\cal M}(h;A):=\limsup_{r\to 0} \frac{V_A(r)}{h(r)},
\]
be the \emph{lower and upper generalized Minkowski content with respect to $h$}. Similarly, we define
the \emph{lower and upper generalized S-content with respect to $h$} by
\[
\underline{\cal S}(h;A):=\liminf_{r\to 0} \frac{S_A(r)}{h(r)} \quad \text{ and } \quad
\overline{\cal S}(h;A):=\limsup_{r\to 0} \frac{S_A(r)}{h(r)}.
\]
If the corresponding upper and lower limits coincide, their common value is denoted by ${\cal M}(h;A)$ and ${\cal S}(h;A)$, respectively.
In the sequel we will establish some relations between generalized contents. It turns out that again derivatives play an important role.
We formulate our results first for general Kneser functions (and their derivatives) and specialize them afterwards to relations between volume and boundary surface area (or Minkowski and S-content).

In \cite{rw09}, the following proposition was proved in order to establish bounds for the Minkowski content in terms of the S-content. The result allows to immediately extend these bounds to the generalized contents.

\begin{Proposition}\cite[Proposition~3.1 and Remark on p.10]{rw09} \label{prop-rw09}
Let $f$ be a Kneser function and let $h:(0,\infty)\to(0,\infty)$ be a differentiable function with $\lim_{r\to 0} h(r)=0$.
Assume that $h'$ is nonzero on some right neighbourhood of $0$.\\
Let $\underline{S}:=\liminf_{r\to 0} f'(r)/h'(r)$ and $\overline{S}:=\limsup_{r\to 0} f'(r)/h'(r)$. Then
$$
\underline{S}\le \liminf_{r\to 0}\frac{f(r)-f(0)}{h(r)}\le \limsup_{r\to 0}\frac{f(r)-f(0)}{h(r)}\le \overline{S}.
$$
In particular, if $\underline{S}=\overline{S}$, i.e.~if the limit $S:=\lim_{r\to 0}f'(r)/h'(r)\in[0,\infty]$ exists then
$\lim_{r\to 0}(f(r)-f(0))/h(r)$ exists as well and equals $S$.
\end{Proposition}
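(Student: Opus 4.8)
The plan is to read this as a one--sided (Dini--derivative) version of L'H\^opital's rule, with the Kneser structure supplying the regularity that the classical statement would get from everywhere--differentiability. The only tool needed beyond bookkeeping is the elementary \emph{increasing function lemma}: if $\phi$ is continuous on $[0,\delta)$ and its right derivative (or merely its lower right Dini derivative) is $\ge 0$ at every point of the open interval $(0,\delta)$, then $\phi$ is nondecreasing on $[0,\delta)$; in particular $\phi(r)\ge\phi(0)$ for $r\in(0,\delta)$.

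First I would normalise. Since $h>0$, $h(r)\to0$ as $r\to0$, and $h'$ is nonzero on some $(0,\delta_0)$, the Darboux property of $h'$ forces $h'>0$ on $(0,\delta_0)$; hence $h$ is strictly increasing there and extends continuously by $h(0):=0$. Also $f$ is continuous on $(0,\infty)$ (Kneser functions are continuous) and, by hypothesis, extends continuously to $0$; put $g:=f-f(0)$, so $g(0)=0$ and $g$ has the same one--sided derivatives as $f$. It then suffices to prove $\limsup_{r\to0}g(r)/h(r)\le\overline S$ and $\liminf_{r\to0}g(r)/h(r)\ge\underline S$. For the first, assume $\overline S<\infty$ and fix $c>\overline S$; I claim there is $\delta\in(0,\delta_0)$ with $f'_+(r)<c\,h'(r)$ for \emph{every} $r\in(0,\delta)$. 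Granting this, $\phi:=c\,h-g$ is continuous on $[0,\delta)$ with $\phi(0)=0$ and right derivative $\phi'_+(r)=c\,h'(r)-f'_+(r)>0$ on $(0,\delta)$; by the increasing function lemma $\phi\ge 0$ there, i.e.\ $g(r)/h(r)\le c$ (recall $h>0$), and letting $c\downarrow\overline S$ gives the bound. The liminf inequality is symmetric: if $\underline S>-\infty$, then for $c<\underline S$ one gets $f'_+(r)>c\,h'(r)$ near $0$, applies the lemma to $g-c\,h$, and lets $c\uparrow\underline S$. The ``in particular'' statement is the case $\underline S=\overline S$.

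The step I expect to be the real obstacle is the claim just used: that $\overline S=\limsup_{r\to0}f'(r)/h'(r)$, which a priori only bounds the quotient on the co--countable set where $f'$ exists, in fact bounds $f'_+(r)$ for \emph{every} small $r$. This is where the Kneser property is essential: by Stach\'o's observation the map $r\mapsto f'_+(r)\,r^{1-d}$ is nonincreasing, whence $f'_+(r)\le f'_+(t)\,(r/t)^{d-1}$ for $0<t<r$; letting $t\uparrow r$ along the dense set of points where $f'$ exists, and using $\overline S<c$ together with the regularity of $h'$ near $0$, upgrades the a.e.\ bound $f'(t)<c\,h'(t)$ to $f'_+(r)\le c\,h'(r)$ (a strict inequality then comes from first shrinking $c$). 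In the application $f=V_A$ this bookkeeping can be bypassed entirely: $V_A$ is absolutely continuous with $V_A(r)-V_A(0)=\int_0^r(V_A)'_+$, and for the gauge functions of interest $h(r)=\int_0^r h'$, so $g(r)/h(r)=(\int_0^r (V_A)'_+)/(\int_0^r h')$ is an $h'(t)\,dt$--weighted average of $(V_A)'_+(t)/h'(t)$ over $(0,r)$ and hence lies between the infimum and supremum of that quotient on $(0,r)$; letting $r\to0$ yields the two inequalities directly.
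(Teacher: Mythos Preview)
This proposition is not proved in the present paper; it is quoted verbatim from \cite{rw09} and used as a black box. So there is no ``paper's own proof'' here to compare against, and I will simply assess your argument on its merits.

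Your L'H\^opital-style strategy is correct in outline, but the step you flag as ``the real obstacle''---upgrading the bound $f'(t)<c\,h'(t)$ from the co-countable differentiability set to $f'_+(r)\le c\,h'(r)$ for \emph{every} small $r$---is both unjustified as written and unnecessary. It is unjustified because your appeal to Stach\'o's monotonicity gives $f'_+(r)\le f'_+(t)(r/t)^{d-1}<c\,h'(t)(r/t)^{d-1}$ for $t<r$ in the differentiability set, and letting $t\uparrow r$ you would need something like $\liminf_{t\uparrow r}h'(t)\le h'(r)$; derivatives enjoy the Darboux property but not this semicontinuity, and no regularity of $h'$ beyond ``nonzero'' is assumed.

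It is unnecessary because the increasing-function lemma you invoke already tolerates a countable exceptional set: if $\phi$ is continuous on $[0,\delta)$ and $\phi'(r)\ge 0$ for all but countably many $r\in(0,\delta)$, then $\phi$ is nondecreasing. Since Kneser functions are differentiable off a countable set (Stach\'o), for $c>\overline S$ you get $\phi'(r)=c\,h'(r)-f'(r)>0$ at all but countably many $r$ in some $(0,\delta)$, and the lemma applies directly. No upgrade is needed. Alternatively, your closing remark is in fact a complete proof for general Kneser $f$ and general $h$ as in the statement: Kneser functions satisfy $f(r)-f(\epsilon)=\int_\epsilon^r f'_+$ (this is exactly what the paper uses repeatedly, e.g.\ in the proof of Proposition~\ref{lem:two-side}), and since $h'>0$ and $h$ is everywhere differentiable with $h'$ integrable (monotonicity of $h$ forces $\int_\epsilon^r h'\le h(r)-h(\epsilon)$), one also has $h(r)-h(\epsilon)=\int_\epsilon^r h'$. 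The ratio $(f(r)-f(\epsilon))/(h(r)-h(\epsilon))$ is then an $h'\,dt$-average of $f'_+/h'$ over $(\epsilon,r)$, and letting $\epsilon\to 0$ gives the two inequalities.
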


Proposition~\ref{prop-rw09} yields the following general relations between generalized Minkow\-ski contents and S-contents.
\begin{Theorem} \label{thm:1-gauge}
Let $A\subset\R^d$ be bounded with $V_A(0)=0$ and let $h:(0,\infty)\to(0,\infty)$ be a differentiable function with $\lim_{r\to 0} h(r)=0$.
Assume that $h'$ is nonzero on some right neighbourhood of $0$.
Then,
$$\underline{\cal S}(h';A)\le \underline{\cal M}(h;A)\le \overline{\cal M}(h;A)\le \overline{\cal S}(h';A).$$
\end{Theorem}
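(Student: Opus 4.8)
The plan is to deduce Theorem~\ref{thm:1-gauge} directly from Proposition~\ref{prop-rw09} by taking $f=V_A$. First I would verify that the hypotheses of Proposition~\ref{prop-rw09} are met: $V_A$ is a Kneser function of order $d$ for every bounded set $A\subset\R^d$ (this is the classical fact of Kneser/Stach\'o already recalled in the excerpt), and the function $h$ is differentiable with $\lim_{r\to 0}h(r)=0$ and $h'$ nonzero near $0$ by assumption. Hence Proposition~\ref{prop-rw09} applies with $f=V_A$, giving
\begin{align*}
\liminf_{r\to 0}\frac{(V_A)'(r)}{h'(r)}\le \liminf_{r\to 0}\frac{V_A(r)-V_A(0)}{h(r)}\le \limsup_{r\to 0}\frac{V_A(r)-V_A(0)}{h(r)}\le \limsup_{r\to 0}\frac{(V_A)'(r)}{h'(r)}.
\end{align*}

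Next I would insert the two normalizing facts. Since $V_A(0)=\Ha^d(A)=0$ by hypothesis, the middle two quotients are exactly $V_A(r)/h(r)$, so their $\liminf$ and $\limsup$ are $\lsM(h;A)$ and $\usM(h;A)$ by definition. For the outer terms, recall from \eqref{vol_deriv} (and the surrounding discussion of Stach\'o's result) that $(V_A)'(r)=\Ha^{d-1}(\bd A_r)=S_A(r)$ for all $r>0$ outside a countable set; since a countable set is negligible when forming $\liminf_{r\to 0}$ and $\limsup_{r\to 0}$, we may replace $(V_A)'(r)$ by $S_A(r)$ in the outer quotients, obtaining $\lsS(h';A)$ and $\usS(h';A)$ respectively. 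Stringing these identifications through the displayed chain of inequalities yields precisely
$$\underline{\cal S}(h';A)\le \underline{\cal M}(h;A)\le \overline{\cal M}(h;A)\le \overline{\cal S}(h';A),$$
which is the assertion.

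The only genuinely delicate point is the passage between $(V_A)'$ and $S_A$: Proposition~\ref{prop-rw09} is stated with the ordinary derivative $f'$, which for a Kneser function exists only off a countable set, whereas $S_A(r)$ is defined for every $r$. I would handle this by noting, as in the proofs of Theorem~\ref{thm:twoside} and Theorem~\ref{thm:Mmeas}, that $(V_A)'_+(r)\le S_A(r)\le (V_A)'_-(r)$ for all $r>0$, with equality for all but countably many $r$; hence $\liminf$ and $\limsup$ of $S_A(r)/h'(r)$ coincide with those of $(V_A)'(r)/h'(r)$, so no information is lost. (One should also keep in mind that $h'$ near $0$ has a constant sign, so dividing by it preserves or uniformly reverses inequalities and does not disturb the argument.) I do not expect any real obstacle beyond this bookkeeping; the substance of the theorem is entirely carried by Proposition~\ref{prop-rw09}.
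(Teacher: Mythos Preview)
Your proposal is correct and follows exactly the paper's approach: the paper's proof is the single sentence ``Apply Proposition~\ref{prop-rw09} to $f(r):=V_A(r)$'', and you have simply spelled out the identifications this entails. The only imprecision is the blanket claim that a countable exceptional set is negligible for $\liminf$/$\limsup$ (false in general), but your sandwich $(V_A)'_+\le S_A\le (V_A)'_-$ together with the observation that restricting to the complement of a countable set can only increase the $\liminf$ and decrease the $\limsup$ gives exactly the inequalities $\lsS(h';A)\le\underline{S}$ and $\overline{S}\le\usS(h';A)$ that are needed.
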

\begin{proof}
Apply Proposition~\ref{prop-rw09} to $f(r):=V_A(r)$.
\end{proof}

Now we establish bounds for generalized S-contents in terms of generalized Minkowski contents providing the natural counterpart to the above inequalities. Taking into account the results of the previous section, it seems reasonable to assume that both the upper and lower generalized Minkowski content are positive and bounded - at least for the derivation of the lower bound. Again, the inequalities are derived from a more general statement on Kneser functions. Unfortunately, the results below are restricted to gauge functions of the special form $h(r)=r^s g(r)$ with $s>0$ and some non-decreasing $g$. Although this covers most gauge functions appearing the literature, the interesting case $s=0$ (appearing e.g.\ for the path of Brownian motion in $\R^d$) is excluded.

\begin{Proposition} \label{lem:two-side-gauge}
Let $f$ be a Kneser function of order $d\ge1$. If
\begin{equation}\label{eqn:two-side1-gauge}
0<\liminf_{r\to 0} \frac{f(r)}{h(r)}\le\limsup_{r\to 0} \frac{f(r)}{h(r)}<\infty
\end{equation}
for some function $h:(0,\infty)\to(0,\infty)$ of the form $h(r)=r^s g(r)$ where $s\in(0,\infty)$ and $g$ is non-decreasing, then
$$
0<\liminf_{r\to 0} \frac{f'_+(r)}{r^{-1}h(r)}\le \limsup_{r\to 0} \frac{f'_-(r)}{r^{-1}h(r)}<\infty.
$$
Moreover, if additionally $g$ (and thus $h$) is differentiable and $\limsup_{r\to 0}\frac {r g'(r)}{g(r)}<\infty$, then
$$
0<\liminf_{r\to 0} \frac{f'_+(r)}{h'(r)}\le \limsup_{r\to 0} \frac{f'_-(r)}{h'(r)}<\infty.
$$
\end{Proposition}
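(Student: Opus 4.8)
The plan is to mimic closely the proof of Proposition~\ref{lem:two-side}, replacing the gauge $r^s$ by $h(r)=r^sg(r)$ and exploiting monotonicity of $g$ to keep the same telescoping estimates. First I would note that \eqref{eqn:two-side1-gauge} gives constants $0<m<M<\infty$ and $r_0>0$ with $m\,h(r)\le f(r)\le M\,h(r)$ for $r\in(0,r_0)$. For the lower bound on $\liminf f'_+(r)/(r^{-1}h(r))$, I would choose $a>1$ so that $m a^s - M>0$, and then estimate, for $ar\le r_0$,
$$
f(ar)-f(r)\ge m\,h(ar)-M\,h(r)=m (ar)^s g(ar) - M r^s g(r)\ge \bigl(m a^s - M\bigr) r^s g(r),
$$
where the last step uses $g(ar)\ge g(r)$ (monotonicity) in the first term and $g(r)>0$. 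On the other hand, as in Proposition~\ref{lem:two-side}, \eqref{eqn:decreasing} gives $f(ar)-f(r)\le f'_+(r)\,r\,(a^d-1)/d$. Combining yields $f'_+(r)/(r^{-1}h(r))=f'_+(r) r/(r^s g(r))\ge (d/(a^d-1))(m a^s - M)>0$ for $r$ small, hence the liminf is positive.

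For the finiteness of $\limsup f'_-(r)/(r^{-1}h(r))$, I would run the simpler half of the Proposition~\ref{lem:two-side} argument: pick $0<b<1$, use $f(r)-f(br)\le f(r)\le M r^s g(r)$, and from \eqref{eqn:decreasing} the lower bound $f(r)-f(br)\ge f'_-(r)\,r\,(1-b^d)/d$. This gives $f'_-(r)r/(r^s g(r))\le Md/(1-b^d)<\infty$, so the limsup is finite. Note that this half does not even need the monotonicity of $g$ nor the lower bound in \eqref{eqn:two-side1-gauge}; as before one can let $b\to 0$ and $M\to\limsup f(r)/h(r)$ to get the sharp constant $d$, though that refinement is not needed here. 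Together these two bounds establish the first displayed conclusion.

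For the second displayed conclusion (with $h'$ in place of $r^{-1}h$), I would compare $h'(r)$ with $r^{-1}h(r)$. Since $h(r)=r^s g(r)$ is differentiable, $h'(r)=s r^{s-1}g(r)+r^s g'(r)=r^{-1}h(r)\bigl(s+ r g'(r)/g(r)\bigr)$. Because $g$ is non-decreasing, $g'\ge 0$, so $h'(r)\ge s\, r^{-1}h(r)>0$, and by the hypothesis $\limsup_{r\to0} r g'(r)/g(r)<\infty$ there is a constant $K$ and $r_1>0$ with $s\le s + r g'(r)/g(r)\le K$ for $r\in(0,r_1)$. Hence for small $r$,
$$
s\,\frac{f'_+(r)}{r^{-1}h(r)}\ \le\ \frac{f'_+(r)}{h'(r)}\Bigl(s+\tfrac{r g'(r)}{g(r)}\Bigr)\ \cdot\ \frac{1}{s+\tfrac{r g'(r)}{g(r)}}\cdot s\ =\ s\,\frac{f'_+(r)}{h'(r)}\cdot\frac{r^{-1}h(r)\,(s+\frac{rg'(r)}{g(r)})}{h'(r)}\ \cdot\ \frac{1}{\,\cdots\,},
$$
which I will simplify cleanly in the writeup to the clean bounds $\tfrac1K\,\dfrac{f'_+(r)}{r^{-1}h(r)}\le \dfrac{f'_+(r)}{h'(r)}\le \tfrac1s\,\dfrac{f'_+(r)}{r^{-1}h(r)}$ and likewise for $f'_-$; these transfer the positivity of the liminf and finiteness of the limsup from the first conclusion to the second. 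I do not expect a serious obstacle here: the only subtlety is that $g'$ need not be monotone or even continuous, so one must be a little careful to phrase the $h'$-comparison purely in terms of the two stated properties of $g$ (non-decreasing, and $\limsup r g'/g<\infty$) rather than any pointwise control; the ratio $r^{-1}h(r)/h'(r)=1/(s+rg'(r)/g(r))$ lies in $[1/K,1/s]$ for small $r$, which is all that is used. The main point to get right, as in Proposition~\ref{lem:two-side}, is the asymmetric use of one-sided derivatives: $f'_+$ via \eqref{eqn:decreasing} on $[r,ar]$ for the lower bound and $f'_-$ on $[br,r]$ for the upper bound, so that the final inequality chain reads $f'_+\le f'_-$ at the junction.
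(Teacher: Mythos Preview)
Your proposal is correct and follows essentially the same route as the paper: the two-sided sandwich $m\,h(r)\le f(r)\le M\,h(r)$, the choice of $a>1$ with $ma^s-M>0$ together with $g(ar)\ge g(r)$ for the lower bound, the choice of $0<b<1$ and \eqref{eqn:f_} for the upper bound, and finally the comparison $h'(r)=r^{-1}h(r)\bigl(s+\tfrac{rg'(r)}{g(r)}\bigr)$ to pass to $h'$. Just replace the garbled display in your third paragraph by the clean two-sided bound $\tfrac1K\,\dfrac{f'_\pm(r)}{r^{-1}h(r)}\le \dfrac{f'_\pm(r)}{h'(r)}\le \tfrac1s\,\dfrac{f'_\pm(r)}{r^{-1}h(r)}$ that you state immediately after; this is exactly what the paper does (phrased via $\liminf$/$\limsup$ of products rather than pointwise sandwiching, but the content is identical).
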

\begin{proof}
By the assumption there exist $0<m<M<\infty$ and $r_0>0$ so that, for all $r\in(0,r_0)$,
\begin{align} \label{eqn:assumption-gauge}
m h(r) \le f(r) \le M h(r).
\end{align}
Choose $a>1$ so that $m a^s-M>0$ (as in the proof of Proposition~\ref{lem:two-side}). Then, by \eqref{eqn:assumption-gauge}, we get for $ar\le r_0$,
$$
f(ar)-f(r)\geq m (ar)^s g(ar)-M r^s g(r)\geq (m a^s-M) r^s g(r),
$$
since $g$ is non-decreasing.
Combining this with inequality \eqref{eqn:decreasing} (which still holds, since $f$ is a Kneser function), we obtain
$$
\frac{f'_+(r)}{r^{s-1}g(r)}\ge \frac d{a^d-1} (m a^s -M)>0
$$
for $r\in(0,\frac{r_0}a)$, and thus $\liminf_{r\to 0} \frac{f'_+(r)}{r^{-1}h(r)}>0$.
If additionally $g$ (and thus $h$) is differentiable, then $h'(r)=s r^{s-1}g(r)+r^s g'(r)=r^{s-1}g(r)(s+\frac{r g'(r)}{g(r)})$. Hence
$$
\liminf_{r\to 0}\frac{f'_+(r)}{h'(r)}=\liminf_{r\to 0}\frac{f'_+(r)}{r^{s-1}g(r)}\cdot\frac 1{s+\frac{r g'(r)}{g(r)}}\ge \liminf_{r\to 0}\frac{f'_+(r)}{r^{s-1}g(r)}\cdot \liminf_{r\to 0}\frac1{s+\frac{r g'(r)}{g(r)}},
$$
where the first $\liminf$ is positive as we have just shown and for the second one this follows from the assumption
$\limsup_{r\to 0}\frac {r g'(r)}{g(r)}<\infty$.

Again a similar but slightly simpler argument shows the finiteness of the corresponding limes superior. Choosing some $0<b<1$, we get from \eqref{eqn:assumption-gauge}
$$
f(r)-f(br)\le f(r)\le M h(r)=M r^s g(r).
$$
Combining this with inequality \eqref{eqn:f_} (which holds, since $f$ is a Kneser function), we obtain $\frac{f'_-(r)}{r^{s-1}g(r)}\le \frac {Md}{1-b^d}$, which implies the finiteness of the first $\limsup$. If again, $g$ (and thus $h$) is differentiable, we have
$$
\limsup_{r\to 0}\frac{f'_-(r)}{h'(r)}=\limsup_{r\to 0}\frac{f'_-(r)}{r^{s-1}g(r)}\cdot\frac 1{s+\frac{r g'(r)}{g(r)}}\leq \limsup_{r\to 0}\frac{f'_-(r)}{r^{s-1}g(r)}\cdot \limsup_{r\to 0}\frac 1{s+\frac{r g'(r)}{g(r)}}.
$$
The expression in the last $\limsup$ is trivially bounded from above by $1/s$ and thus we conclude the finiteness of $\limsup_{r\to 0}\frac{f'_-(r)}{h'(r)}$.
Note that the assumption on the limes inferior is not needed for the proof of the bounds for the $\limsup$'s and that also the last condition $\limsup_{r\to 0}\frac {r g'(r)}{g(r)}<\infty$ is not used in this part. Since $b$ can be chosen arbitrarily close to $0$ and $M$ arbitrarily close to $\limsup_{r\to 0} \frac{f(r)}{h(r)}$, we obtain the relation
\begin{align}\label{eqn:bound_gauge}
\limsup_{r\to 0} \frac{f'_-(r)}{r^{-1}h(r)}\le d \limsup_{r\to 0} \frac{f(r)}{h(r)}.
\end{align}
\end{proof}

\begin{Theorem} \label{cor:h-twoside}
Let $A$ be a bounded subset of $\R^d$ such that
$$
0<\lsM(h;A)\le\usM(h;A)<\infty,
$$
for some gauge function $h:(0,\infty)\to(0,\infty), r\mapsto r^{d-D} g(r)$ with $D\in[0,d)>0$ and $g$ non-decreasing.
Then
$$
0<\lsS(r^{-1}h; A)\le\usS(r^{-1}h;A)<\infty.
$$
If additionally, $g$ is differentiable and $\limsup_{r\to 0}\frac {r g'(r)}{g(r)}<\infty$, then
$$
0<\lsS(h'; A)\le\usS(h';A)<\infty.
$$
\end{Theorem}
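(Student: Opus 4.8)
The plan is to deduce this from Proposition~\ref{lem:two-side-gauge} exactly as Theorem~\ref{thm:twoside} was deduced from Proposition~\ref{lem:two-side}, together with the one-sided bound of Theorem~\ref{thm:1-gauge} for the converse-type inequalities that make the S-content well sandwiched. First I would set $f(r):=V_A(r)$ and recall that $f$ is a Kneser function of order $d$ for every bounded $A\subset\R^d$. The hypothesis $0<\lsM(h;A)\le\usM(h;A)<\infty$ is precisely hypothesis \eqref{eqn:two-side1-gauge} of Proposition~\ref{lem:two-side-gauge} with the given $h(r)=r^{d-D}g(r)$, $s=d-D>0$ and $g$ non-decreasing. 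Applying that proposition yields
$$
0<\liminf_{r\to 0}\frac{(V_A)'_+(r)}{r^{-1}h(r)}\le\limsup_{r\to 0}\frac{(V_A)'_-(r)}{r^{-1}h(r)}<\infty .
$$

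Next I would insert the pointwise inequalities $(V_A)'_+(r)\le S_A(r)\le (V_A)'_-(r)$, valid for every $r>0$ (these come from \eqref{vol_deriv} together with Stach\'o's one-sided-derivative estimates, and are already used verbatim in the proofs of Theorems~\ref{thm:twoside} and \ref{thm:Mmeas}). From $S_A(r)\ge (V_A)'_+(r)$ and the positivity of the first liminf above we get $\lsS(r^{-1}h;A)>0$; from $S_A(r)\le (V_A)'_-(r)$ and the finiteness of the second limsup we get $\usS(r^{-1}h;A)<\infty$. Since trivially $\lsS(r^{-1}h;A)\le\usS(r^{-1}h;A)$, the first displayed conclusion follows.

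For the second conclusion, under the extra hypotheses that $g$ is differentiable and $\limsup_{r\to0}rg'(r)/g(r)<\infty$, I would invoke the second half of Proposition~\ref{lem:two-side-gauge}, which under exactly these assumptions gives
$$
0<\liminf_{r\to 0}\frac{(V_A)'_+(r)}{h'(r)}\le\limsup_{r\to 0}\frac{(V_A)'_-(r)}{h'(r)}<\infty ,
$$
and then sandwich $S_A(r)$ between $(V_A)'_+(r)$ and $(V_A)'_-(r)$ once more, noting that $h'(r)=r^{s-1}g(r)(s+rg'(r)/g(r))>0$ for small $r$ so the quotients are well defined and the inequalities are preserved. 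This yields $0<\lsS(h';A)\le\usS(h';A)<\infty$, completing the proof.

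The routine points to double-check are that $h'$ is indeed positive (hence nonzero) on a right neighbourhood of $0$, which holds because $s=d-D>0$, $g>0$ and $rg'(r)/g(r)\ge 0$ is bounded, so $s+rg'(r)/g(r)$ stays in a bounded interval bounded away from $0$; and that the Kneser property of $V_A$ is not affected by the choice of gauge. I do not expect a genuine obstacle here: the whole content has been pushed into Proposition~\ref{lem:two-side-gauge}, and this theorem is its translation to parallel volumes and surface areas via the elementary derivative bounds. If anything, the mildly delicate step is making sure the hypothesis $\limsup_{r\to0}rg'(r)/g(r)<\infty$ is used only where needed (for the liminf part of the $h'$-statement), exactly as flagged in the proof of Proposition~\ref{lem:two-side-gauge}, so that the limsup half of the $h'$-conclusion holds under the weaker assumptions.
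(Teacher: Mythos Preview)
Your proposal is correct and follows essentially the same route as the paper's proof: set $f=V_A$, apply Proposition~\ref{lem:two-side-gauge} with $s=d-D$, and sandwich $S_A$ between $(V_A)'_+$ and $(V_A)'_-$ to transfer the bounds. The only superfluous element is your mention of Theorem~\ref{thm:1-gauge} in the opening plan; as your own execution shows, it plays no role here---all the work is done by Proposition~\ref{lem:two-side-gauge} and the pointwise derivative inequalities.
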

\begin{proof}
Let $f(r):=V_A(r)$ and recall that $f$ is a Kneser function of order $d$ for each bounded set $A$ in $\R^d$. Set $s:=d-D$. The assumptions imply that $s>0$ and that the hypothesis \eqref{eqn:two-side1-gauge} of Proposition~\ref{lem:two-side-gauge} is satisfied. Hence, applying this proposition, we obtain
$$
0<\liminf_{r\to 0} \frac{(V_A)'_+(r)}{r^{d-D-1}g(r)}\le \limsup_{r\to 0} \frac{(V_A)'_-(r)}{r^{d-D-1}g(r)}<\infty.
$$
Now note that $S_A(r)\ge (V_A)'_+(r)$ for each $r>0$, which implies the positivity of $\lsS(r^{-1}h;A)$, and that $S_A(r)\le(V_A)'_-(r)$ for each $r>0$, which allows to conclude the finiteness of $\usS(r^{-1}h;A)$. If $g$ (and thus $h$) is differentiable and $\limsup_{r\to 0}\frac {r g'(r)}{g(r)}<\infty$, then the second assertion of Proposition~\ref{lem:two-side-gauge} can be applied. Again, the inequality $S_A(r)\ge (V_A)'_+(r)$, yields
the positivity of $\lsS(h';A)$ and the inequality  $S_A(r)\le(V_A)'_-(r)$ the finiteness of  $\usS(h';A)$.
\end{proof}

\begin{Remark}
\emph{
Note that for the finiteness of $\usS(r^{-1}h;A)$ (and $\usS(h';A)$, respectively), only the finiteness of $\usM(h;A)$ is required, while for the corresponding lower bounds the whole hypothesis is needed.
Moreover, as a corollary to the proof of Proposition~\ref{lem:two-side-gauge} we obtain the direct relation
$$
\usS(r^{-1}h; A)\le d \usM(h;A),
$$
and, in case $g$ is differentiable, also $\usS(h'; A)\le d \usM(h;A)
$.
}
\end{Remark}

Refining the argument in the proof of Proposition~\ref{lem:two-side-gauge}, we will now establish, that the existence of the generalized Minkowski content $\sM(h;A)$ of a bounded set $A\subset \R^d$ (of Minkowski dimension $D\in[0,d)$) implies the existence of its generalized S-content $\sS(h';A)$. This extends Proposition~\ref{lem:lim-exists} and its corollaries to the case of gauge functions.
\begin{Proposition} \label{lem:lim-exists-gauge}
Let $f$ be a Kneser function of order $d\ge1$. If
\begin{equation}\label{eqn:lim-exists-gauge}
\lim_{r\to 0} \frac{f(r)}{h(r)}=C
\end{equation}
for some function $h:(0,\infty)\to(0,\infty)$ of the form $h(r)=r^s g(r)$ where $s,C\in(0,\infty)$ are positive constants and $g$ is non-decreasing, then
$$
\lim_{r\to 0} \frac{f'_+(r)}{s r^{s-1}g(r)}=\lim_{r\to 0} \frac{f'_-(r)}{s r^{s-1}g(r)}=C.
$$
If $g$ is differentiable and $\lim_{r\to 0}\frac {r g'(r)}{g(r)}=0$, then
$$
\lim_{r\to 0} \frac{f'_+(r)}{h'(r)}=\lim_{r\to 0} \frac{f'_-(r)}{h'(r)}=C.
$$
\end{Proposition}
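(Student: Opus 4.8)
The plan is to reproduce the proof of Proposition~\ref{lem:lim-exists} step by step, carrying the gauge factor $g$ through the estimates exactly as in the passage from Proposition~\ref{lem:two-side} to Proposition~\ref{lem:two-side-gauge}.

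First I would establish the lower bound for $f'_+$. Fix $\eps\in(0,\min\{1,C\})$, choose $a>1$ with $a^s=(C+\sqrt\eps)/(C-\sqrt\eps)$ (so that $(C-\eps)a^s-(C+\eps)>0$, using $\sqrt\eps>\eps$), and pick $r_0=r_0(\eps)$ with $(C-\eps)h(t)\le f(t)\le(C+\eps)h(t)$ for $t\in(0,r_0)$. Since $g$ is non-decreasing, $h(ar)=(ar)^sg(ar)\ge a^sr^sg(r)$, so for $ar\le r_0$,
$$
f(ar)-f(r)\ge (C-\eps)h(ar)-(C+\eps)h(r)\ge\bigl((C-\eps)a^s-(C+\eps)\bigr)r^sg(r),
$$
while \eqref{eqn:decreasing} gives $f(ar)-f(r)=\int_r^{ar}f'_+(t)\,dt\le f'_+(r)\,r\,(a^d-1)/d$. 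Combining the two bounds, letting $\eps\to0$ so that $a=a(\eps)\to1$, and carrying out the same two applications of L'H\^opital's rule as in the proof of Proposition~\ref{lem:lim-exists} (the error term $d\eps\frac{a^s+1}{a^d-1}\to0$ and $\frac{a^s-1}{a^d-1}\to s/d$), one obtains $\liminf_{r\to0}f'_+(r)/(sr^{s-1}g(r))\ge C$.

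For the matching upper bound on the $\limsup$ of $f'_-$, I would choose $b=b(\eps)\in(0,1)$ with $b^s=(C-\sqrt\eps)/(C+\sqrt\eps)$. Inequality \eqref{eqn:f_} (valid since $f$ is Kneser) gives $f(r)-f(br)\ge f'_-(r)\,r\,(1-b^d)/d$, while the hypothesis gives $f(r)-f(br)\le(C+\eps)r^sg(r)-(C-\eps)b^sr^sg(br)$; dividing by $r^{s-1}g(r)$ yields
$$
\frac{f'_-(r)}{r^{s-1}g(r)}\le\frac{d}{1-b^d}\Bigl((C+\eps)-(C-\eps)b^s\,\frac{g(br)}{g(r)}\Bigr).
$$
Here lies the crux: one must bound $g(br)/g(r)$ from below, on a right neighbourhood of $0$, by a quantity tending to $1$ as $\eps\to0$ (equivalently $b\to1$), so that the right-hand side collapses to $dC\lim_{b\to1}\frac{1-b^s}{1-b^d}=sC$. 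In the lower-bound step monotonicity of $g$ acted in the favourable direction, but here it only yields $g(br)/g(r)\le1$, which is the wrong direction, so the needed lower bound has to come from combining the Kneser property of $f$ with the hypothesis $f(r)/h(r)\to C$. Granting this, $\limsup_{r\to0}f'_-(r)/(sr^{s-1}g(r))\le C$, and together with the previous paragraph and $f'_+\le f'_-$ this gives the first assertion.

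For the second assertion it suffices to observe that $h'(r)=sr^{s-1}g(r)+r^sg'(r)=r^{s-1}g(r)\bigl(s+rg'(r)/g(r)\bigr)$, so the extra hypothesis $\lim_{r\to0}rg'(r)/g(r)=0$ gives $h'(r)/(sr^{s-1}g(r))\to1$; dividing the limits just obtained by this factor yields $\lim_{r\to0}f'_\pm(r)/h'(r)=C$. I expect the control of the quotient $g(br)/g(r)$ in the $\limsup$ step to be the main obstacle; once it is tamed, the remaining work (the two L'H\^opital computations, the bookkeeping of the $\eps$-errors, and the final division) is routine.
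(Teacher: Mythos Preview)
Your approach is exactly the paper's: rerun the proof of Proposition~\ref{lem:lim-exists} with $g$ carried along, as in Proposition~\ref{lem:two-side-gauge}. For the upper bound the paper simply asserts that ``an analogous argument'' yields
\[
\limsup_{r\to 0} \frac{f'_-(r)}{r^{s-1}g(r)}\le dC\,\frac{1-b^s}{1-b^d}+d\eps\,\frac{1+b^s}{1-b^d},
\]
without ever addressing the ratio $g(br)/g(r)$; so the obstacle you isolate is genuine, and the paper glosses over it rather than resolving it.

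In fact it \emph{cannot} be resolved from the Kneser property of $f$ together with $f(r)/h(r)\to C$, because the first assertion is false as stated. Take $d=2$, $s=1$, $g(r)=r$ (non-decreasing), so $h(r)=r^2$, and $f(r)=r^2$ (a Kneser function of order $2$): then $f/h\equiv 1=C$, while $f'(r)/\bigl(s\,r^{s-1}g(r)\bigr)=2r/r=2\neq C$. Your lower-bound half is correct and yields $\liminf\ge C$ (here the true value $2$ exceeds $C=1$), but no matching upper bound $\le C$ can hold in this generality, so your hope of extracting one from the Kneser property is misplaced. What rescues the \emph{second} assertion is that its extra hypothesis $\lim_{r\to 0} r g'(r)/g(r)=0$ forces $g(br)/g(r)\to 1$ for every fixed $b\in(0,1)$: indeed $\log\bigl(g(r)/g(br)\bigr)=\int_{br}^{r}\frac{g'(t)}{g(t)}\,dt=\int_{br}^{r}\frac{tg'(t)/g(t)}{t}\,dt$, and once $|tg'(t)/g(t)|<\eps$ on $(0,r_0)$ this is at most $\eps\log(1/b)$. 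With $g(br)/g(r)\to 1$ in hand your $\limsup$ estimate goes through exactly as written, and the final division by $h'(r)/\bigl(sr^{s-1}g(r)\bigr)\to 1$ finishes. Thus your plan and the paper's coincide in method; both need this missing observation about $g(br)/g(r)$, and the first assertion requires an additional hypothesis on $g$ (slow variation at $0$, for instance) to be valid.
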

\begin{proof}
Let $\eps\in(0,\min\{1,C\})$. Choose $a>1$ so that
\begin{equation}\label{eqn:choice-of-a-gauge}
a^s \ge \frac{C+\sqrt{\eps}}{C-\sqrt{\eps}}.
\end{equation}
Since $\sqrt{\eps}>\eps$ and thus $C+\sqrt{\eps}>C+\eps$ and  $C-\sqrt{\eps}<C-\eps$, we have
$$
(C-\eps)a^s - (C+\eps)>0.
$$
Repeating the argument of the first part of the proof of Proposition~\ref{lem:two-side-gauge} with $m:=C-\eps$ and $M:=C+\eps$, we infer that there exists some $r_0=r_0(\eps)$ such that
$$
\frac{f'_+(r)}{r^{s-1}g(r)}\ge \frac d{a^d-1} ((C-\eps)a^s - (C+\eps))=dC \frac{a^s-1}{a^d-1}-d\eps\frac{a^s+1}{a^d-1}>0
$$
for each $r\in(0,r_0)$. Hence
$$
\liminf_{r\to 0}\frac{f'_+(r)}{r^{s-1}g(r)}\ge dC \frac{a^s-1}{a^d-1}-d\eps\frac{a^s+1}{a^d-1}
$$
for each $\eps\in(0,\min\{1,C\})$ and each $a>1$ satisfying \eqref{eqn:choice-of-a-gauge}. Now we choose $a$ such that equality holds in \eqref{eqn:choice-of-a} and let $\eps$ tend to 0. Then $a=a(\eps)$ converges to 1 and so does $a^t$ for each $t\in\R$.
Moreover, for the last term on the right hand side we have
$$
\limsup_{\eps\to 0}d\eps\frac{a^s+1}{a^d-1}\le \lim_{\eps\to 0} \frac{3d\eps}{\left(\frac{C+\sqrt{\eps}}{C-\sqrt{\eps}}\right)^{\frac ds}-1}=0\,,
$$
which is easily seen by applying L'H\^opital's rule. Using again L'H\^opital's rule, we conclude that
$$
\liminf_{r\to 0}\frac{f'_+(r)}{sr^{s-1}g(r)}\ge  \frac ds C \lim_{a\searrow 1} \frac{a^s-1}{a^d-1}=\frac ds C \lim_{a\searrow 1} \frac{s a^{s-1}}{da^{d-1}}=C\,.
$$
An analogous argument shows that
$$
\limsup_{r\to 0}\frac{f'_-(r)}{sr^{s-1}g(r)}\le C.
$$
(Here one can choose $b=b(\eps)$ such that $b^s=(C-\sqrt{\eps})/(C+\sqrt{\eps})$ and derive the inequality
$$
\limsup_{r\to 0} \frac{f'_-(r)}{r^{s-1}g(r)}\le dC \frac{1-b^s}{1-b^d}+ d\eps \frac{1+b^s}{1-b^d}
$$
from which the claim follows by letting again $\eps\to 0$.)

The first assertion of the proposition is now obvious taking into account that $f'_+\le f'_-$.
Now assume that $g$ (and thus $h$) is differentiable and that $\lim_{r\to 0}\frac {r g'(r)}{g(r)}=0$. Since $h'(r)=r^{s-1} g(r) (s+\frac{rg'(r)}{g(r)})$, we get on the one hand
$$
\liminf_{r\to 0}\frac{f'_+(r)}{h'(r)}=\liminf_{r\to 0}\frac{f'_+(r)}{s r^{s-1}g(r)}\cdot\frac 1{1+\frac 1s \frac{r g'(r)}{g(r)}}\ge C\cdot \liminf_{r\to 0}\frac1{1+\frac 1s \frac{r g'(r)}{g(r)}}=C,
$$
and on the other hand
$$
\limsup_{r\to 0}\frac{f'_-(r)}{h'(r)}=\limsup_{r\to 0}\frac{f'_-(r)}{s r^{s-1}g(r)}\cdot\frac 1{1+\frac 1 s\frac{r g'(r)}{ g(r)}}\leq C\cdot \limsup_{r\to 0}\frac 1{1+\frac 1s \frac{r g'(r)}{g(r)}}=C.
$$
\end{proof}

\begin{Theorem}
Let $A$ be a bounded subset of $\R^d$ and let $h:(0,\infty)\to(0,\infty)$ be a gauge function of the form $h(r)=r^{d-D}g(r)$ for some $D\in[0,d)$ and some non-decreasing, differentiable function $g$ satisfying $\lim_{r\to 0}\frac {r g'(r)}{g(r)}=0$. Let $M\in(0,\infty)$. Then
$$
\sM(h;A)=M,
$$
if and only if
$$
\sS(h';A)=M.
$$
That is, the set $A$ is generalized Minkowski measurable (with respect to $h$) if and only if it is generalized S-measurable (with respect to $h'$)
and in this case both contents coincide.
\end{Theorem}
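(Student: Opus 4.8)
The plan is to mirror the proof of Theorem~\ref{thm:Mmeas}, replacing Proposition~\ref{lem:lim-exists} by its gauge-function refinement Proposition~\ref{lem:lim-exists-gauge} and the inequalities \eqref{eq:ucont}--\eqref{eq:lcont} by Theorem~\ref{thm:1-gauge}. Thus this final theorem should come out as a corollary of the material already established in Section~\ref{sec:general-gauge}: the only work is to translate the statements about Kneser functions into statements about the parallel volume $V_A$ and the boundary surface $S_A$, and to check the mild regularity requirements on $h$.

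For the implication ``$\sM(h;A)=M$ implies $\sS(h';A)=M$'' I would put $f(r):=V_A(r)$, recall that $f$ is a Kneser function of order $d$, and write $h(r)=r^{s}g(r)$ with $s:=d-D>0$. The hypothesis is exactly $\lim_{r\to0}f(r)/h(r)=M$, so Proposition~\ref{lem:lim-exists-gauge} applies with $C=M$; since $g$ is differentiable with $rg'(r)/g(r)\to0$, its second conclusion gives $\lim_{r\to0}f'_+(r)/h'(r)=\lim_{r\to0}f'_-(r)/h'(r)=M$, where one uses that $h'(r)=r^{s-1}g(r)\bigl(s+rg'(r)/g(r)\bigr)$ is strictly positive for all small $r$ (the bracket tends to $s>0$). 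As $f=V_A$ we have $f'_\pm=(V_A)'_\pm$, so dividing the elementary chain $(V_A)'_+(r)\le S_A(r)\le(V_A)'_-(r)$ --- the same one used in the proofs of Theorems~\ref{thm:twoside} and~\ref{thm:Mmeas} --- by $h'(r)$ and letting $r\to0$ squeezes $S_A(r)/h'(r)$ to $M$, that is, $\sS(h';A)=M$.

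For the reverse implication I would appeal to Theorem~\ref{thm:1-gauge}. First one checks that $h$ is admissible there: $h$ is differentiable on $(0,\infty)$; $h(r)=r^{d-D}g(r)\to0$ as $r\to0$ because $d-D>0$ and the non-decreasing function $g$ is bounded on a right neighbourhood of $0$; and $h'$ is eventually nonzero by the formula above. Then $\sS(h';A)=M$ forces, via Theorem~\ref{thm:1-gauge}, $M=\underline{\cal S}(h';A)\le\underline{\cal M}(h;A)\le\overline{\cal M}(h;A)\le\overline{\cal S}(h';A)=M$, hence $\sM(h;A)=M$. The one point I expect to require care is that Theorem~\ref{thm:1-gauge} carries the hypothesis $V_A(0)=0$: in the forward direction this is automatic, since $V_A(r)=h(r)\cdot\bigl(V_A(r)/h(r)\bigr)\to0$ once $\sM(h;A)$ is finite while $V_A(r)\downarrow V_A(0)$; in the reverse direction it must be read into the setting --- as is already the case for Theorem~\ref{thm:Mmeas} --- and holds for instance whenever $\overline{\cal M}(h;A)<\infty$, in particular under the standing reading that $D$ is the Minkowski dimension of $A$; this is the assumption I would state explicitly. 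Once both implications are in place, the equivalence of generalized Minkowski and $S$-measurability, with equality of the two contents, is immediate.
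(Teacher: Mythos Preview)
Your proof is correct and follows essentially the same route as the paper's: apply Proposition~\ref{lem:lim-exists-gauge} to $f=V_A$ together with the sandwich $(V_A)'_+\le S_A\le (V_A)'_-$ for the forward direction, and invoke Theorem~\ref{thm:1-gauge} for the converse. You are in fact more careful than the paper on one point: you flag that Theorem~\ref{thm:1-gauge} carries the hypothesis $V_A(0)=0$, which the paper's proof uses without comment; your observation that this follows automatically from $\sM(h;A)<\infty$ in the forward direction, but must be assumed (or derived from context) for the reverse direction, is well taken.
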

\begin{proof}
Assume that  $\sM(h;A)=M$. Let $f(r):=V_A(r)$ and $s:=d-D$ and apply Proposition~\ref{lem:lim-exists-gauge} (for which the assumption $\lim_{r\to 0}\frac {r g'(r)}{g(r)}=0$ used). Then $
\sS(h';A)=M
$ follows from the fact that
$(V_A)'_+\le S_A\le (V_A)'_-$. The reverse implication is a direct consequence of Theorem~\ref{thm:1-gauge}. The special form of $h$ and the fact that $g$ is nondecreasing imply $\lim_{r\to 0} h(r)=0$ and $g'(r)\geq 0$. The latter implies also that $h'(r)=s r^{s-1} g(r)+ r^s g'(r)$ is non-zero for $r>0$. Hence the hypotheses of Theorem~\ref{thm:1-gauge} are satisfied.
\end{proof}

\begin{Remark}
\emph{
The condition $\lim_{r\to 0}\frac {r g'(r)}{g(r)}=0$ in the statement above is a very reasonable assumption. Roughly it means that the function $g$ grows slower than any power $r^t$, $t>0$ (since $(r^t)'=t r^{t-1}$ and so $\frac{r\cdot (r^t)'}{r^t}=t$). Thus a violation of this assumption essentially means that the exponent $s$ in $h(r)=r^s g(r)$ is not chosen correctly.
The condition
$\limsup_{r\to 0}\frac {r g'(r)}{g(r)}<\infty$ appearing in the statements before is even weaker.
Note that if the function $g$ is e.g.\ concave, then we have $\frac {r g'(r)}{g(r)}\le 1$ for each $r>0$ and so the latter condition is automatically satisfied. 
\\
Assuming differentiability of $h$ is also not really a restriction, since we are only interested in the asymptotics as $r\to 0$. For any continuous, non-decreasing function $h:(0,\infty)\to(0,\infty)$ there is a differentiable function $\tilde h:(0,\infty)\to(0,\infty)$ such that $\lim_{r\to 0} \frac {\tilde h(r)}{h(r)}=1$.
}
\end{Remark}

\section{Counterexamples} \label{sec:ex}

In \cite{rh10}, an example of a Kneser function $f$ was presented with the property $$
\lim_{r\to 0} \frac{f(r)}{h(r)}=1 \quad (\text{shortly, } f(r)\sim h(r) \text{ as } r\to 0_+)
$$
for the gauge function $h(x)=|\log x|^{-1}$, while at the same time the limit\\ $\lim_{r\to 0_+}f'(r)/h'(r)$ did not exist. We recall this construction here and modify it in order to get a Kneser function $f$ for which even $\liminf_{r\to 0_+}f'(r)/h'(r)=0$ and $\limsup_{r\to 0_+}f'(r)/h'(r)=\infty$ holds. The examples show in particular, that the assumptions on the gauge function $h$ in
Propositions~\ref{lem:two-side-gauge} and \ref{lem:lim-exists-gauge} cannot be dropped in general.

Consider the following scheme producing Kneser functions of order $2$. Let $r_i\searrow 0$ and $a_i\nearrow \infty$ be two monotone positive sequences such that
\begin{equation}  \label{*}
\sum_ia_ir_i^2<\infty.
\end{equation}
Let $f$ be such that
$$f'(x)=2a_ix,\quad x\in (r_{i+1},r_i),\quad i=1,2,\ldots .$$
Condition \eqref{*} guarantees that such an $f$ exists, namely,
\begin{eqnarray*}
f(x)&=&\sum_{j\geq i}\int_{r_{j+1}}^{r_j}2a_jy\, dy +\int_{r_i}^x 2a_iy\, dy\\
&=&\sum_{j\geq i}a_j(r_j^2-r_{j+1}^2) +\int_{r_i}^x 2a_iy\, dy,\quad x\in (r_{i-1},r_i).
\end{eqnarray*}
The monotonicity of $(a_i)$ ensures that $f$ is a Kneser function of order $2$.

In \cite{rh10}, the authors considered (up to some constant) the following case:
$$r_i=2^{-i},\quad a_i=\frac{2^{2i}}{i(i+1)},\quad i=1,2,\ldots .$$
It is not difficult to verify that
$$f(r_i)=\sum_{j\geq i}a_j(r_j^2-r_{j+1}^2)=\frac 34\sum_{j\geq i}\frac 1{j(j+1)}=\frac 3{4i} =\frac{3\log 2}4\frac 1{|\log r_i|}$$
and, choosing $h(r)=\frac{3\log 2}4\frac 1{|\log r|}$, from the monotonicity of $f$ we get
$$f(r)\sim h(r),\quad \text{ as } r\to 0+.$$
On the other hand, we have
\begin{eqnarray*}
f'(r_i-)&=&2a_ir_i=2\frac{2^i}{i(i+1)}=2\frac i{i+1}\frac {(\log2)^2}{r_i(\log r_i)^2} =\frac{8\log 2}{3}\frac{i}{i+1}h'(r_i),\\
f'(r_i+)&=&2a_{i-1}r_i=2\frac{2^i}{i(i+1)}=\frac 12\frac i{i+1}\frac {(\log 2)^2}{r_i(\log r_i)^2} =\frac{2\log 2}{3}\frac{i}{i+1}h'(r_i).
\end{eqnarray*}
Consequently,
$$\liminf_{r\to 0_+}\frac{f'(r)}{h'(r)}\leq \frac{2\log 2}3<\frac{8\log 2}3\leq\limsup_{r\to 0_+}\frac{f'(r)}{h'(r)}.$$
This shows that in Proposition~\ref{lem:lim-exists-gauge} the assumption $h(r)=r^sg(r)$ with some $s>0$ and $g$ non-decreasing cannot be omitted .

We shall now consider a modified version of the above example showing that also in Proposition~\ref{lem:two-side-gauge} the assumption on the gauge function cannot be relaxed. Consider the sequences
$$r_i=2^{-2^i},\quad a_i=\frac {2^{2^{i+1}}}{i(i+1)}$$
which are again monotone. The associated function $f$ is a Kneser function of order $2$. Again, condition \eqref{*} is clearly fulfilled. We have
\begin{eqnarray*}
f(r_i)&=&\sum_{j\geq i}a_j(r_j^2-r_{j+1}^2)=\sum_{j\geq i}\frac 1{j(j+1)}\left( 1-2^{-2^{j+1}}\right)\\
&\sim&\sum_{j\geq i}\frac 1{j(j+1)}=\frac 1i \quad \text{ as } i\to\infty.
\end{eqnarray*}
Thus, with the gauge function $h(r)=\log 2/\log|\log r|$, since
$$h(r_i)=\frac{\log 2}{i\log 2+\log\log 2}\sim \frac 1i,$$
we get $f(r_i)\sim h(r_i)$ as $i\to \infty$, and, using the monotonicity of $f$, it is not difficult to see that
$$f(r)\sim h(r),\quad r\to 0_+.$$
On the other hand, we have
$$h'(r_i)=\frac 1{\log^22}\frac{2^{2^i-i}}{i^2},$$
whereas
\begin{eqnarray*}
f'(r_i-)&=&2a_ir_i=2\frac{2^{2^i}}{i(i+1)},\\
f'(r_i+)&=&2a_{i-1}r_i=2\frac{1}{i(i+1)}.
\end{eqnarray*}
Thus,
$$0=\liminf_{r\to 0_+}\frac{f'(r)}{h'(r)}<\limsup_{r\to 0_+}\frac{f'(r)}{h'(r)}=\infty.$$

The counterexamples presented above are examples of Kneser functions. It is not clear whether there exist sets, having these functions as their volume function. It would be even more interesting, to obtain analogous examples of volume functions of sets. This seems to be a more difficult problem and we formulate it here as an open question.

\begin{question}
Does there exist a bounded set $A\subset\R^n$ and a gauge function $h$ (assumed to be differentiable without loss of generality) such that $A$ is generalized Minkowski measurable with respect to $h$, but not generalized S-measurable with respect to $h'$?
\end{question}

\section{Subsets of the line} \label{sec:one-dim}

We draw our attention to the case $d=1$ and relate the above results to the results of Lapidus and Pomerance in \cite{LapPo93}, where a characterization of Minkowski measurability is given in connection with the proof of the Modified Weyl-Berry conjecture in dimension one. In view of the previous sections, it is natural to add the criterion of S-measurability to the equivalent characterizations of Minkowski measurability given. It turns out that, using the S-content as an intermediate step and applying the above results, some of the proofs in \cite{LapPo93} can be significantly simplified. Moreover, the results below indicate that from a certain point of view, the  surface area of the parallel sets might be the proper object in higher dimensions to replace the fractal string associated to sets on the real line.

Recall that to any compact subset $F\subset\R$, one can associate a unique \emph{fractal string} $\sL=(l_j)_{j=1}^\infty$, that is, a nonincreasing sequence of real numbers $l_j$ encoding the lengths of the bounded complementary intervals of $F$.
When comparing the formulas below with the ones in \cite{LapPo93}, it should be kept in mind that here we have an additional normalization constant $\kappa_{d-s}$ in the definition of the $s$-dimensional Minkowski content.
In analogy with the notation in the previous section, for two positive sequences $(a_j)_{j\in\N}$ and $(b_j)_{j\in\N}$ we write $a_j\sim b_j$ as $j\to\infty$, if $\lim_{j\to\infty}a_j/b_j= 1$, and similarly $a_j\approx b_j$ as $j\to\infty$, if there are constants $c, C$ and $j_0$ such that $c\le a_j/b_j\le C$ for all $j\geq j_0$.

\begin{Theorem}\label{thm:one-dim}
Let $F\subset\R$ be a compact set with Minkowski dimension $\dim_M F =D\in(0,1)$ (i.e.~in particular with $\lambda(F)=0$) and let $\sL=(l_j)_{j=1}^\infty$ be the fractal string associated with $F$.\\
\emph{(a) Two sided bounds.} The following assertions are equivalent:
\begin{enumerate}
\item[(i)] $0<\lsM^D(F)\le\usM^D(F)<\infty$
\item[(ii)] $0<\lsS^D(F)\le\usS^D(F)<\infty$
\item[(iii)] $l_j\approx j^{-1/D}$ as $j\to\infty$
\end{enumerate}
\emph{(b) Criterion of Minkowski measurability.} The following assertions are equivalent:
\begin{enumerate}
\item[(i)] $F$ is Minkowski measurable,
\item[(ii)] $F$ is S-measurable, i.e., $0<\sS^D(F)<\infty$,
\item[(iii)] $l_j\sim L j^{-1/D}$ as $j\to\infty$ for some $L>0$.
\end{enumerate}
Under these latter assertions, Minkowski and S-content of $F$ are given by
\begin{align} \label{eqn:Scont-exists}
\sM^D(F)=\sS^D(F)=\frac{2^{1-D}}{\kappa_{1-D}}\frac{L^D}{1-D}.
\end{align}
\end{Theorem}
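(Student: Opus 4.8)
\textbf{Proof plan for Theorem~\ref{thm:one-dim}.}
The strategy is to reduce everything to the already-established equivalence between Minkowski and S-contents (Theorems~\ref{thm:twoside} and \ref{thm:Mmeas}) together with the classical analysis of fractal strings from \cite{LapPo93}. The only genuinely new ingredient is the bookkeeping that connects the sequence asymptotics $l_j\approx j^{-1/D}$ (resp.\ $l_j\sim Lj^{-1/D}$) to the Minkowski content, and then exploiting our results to bypass the more delicate parts of the Lapidus--Pomerance argument that involve the surface area (boundary point count).

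For part (a): the equivalence (i)$\Leftrightarrow$(ii) is immediate from Theorem~\ref{thm:twoside} applied with the set $A=F\subset\R$ (here $d=1$, $D\in(0,1)$, so $D\in[0,d)$). The equivalence (i)$\Leftrightarrow$(iii) is exactly the content of the first half of the Lapidus--Pomerance characterization: writing $V_F(r)=\lambda(F_r)$ in terms of the string, one has $V_F(r)=\sum_{j:\,l_j\ge 2r} (\text{contribution})+\sum_{j:\,l_j<2r}l_j$, which up to elementary estimates is comparable to $\sum_{j:\,l_j\ge 2r}2r+\sum_{j:\,l_j<2r}l_j$, and a standard Abel/summation-by-parts computation shows that $V_F(r)\approx r^{1-D}$ as $r\to 0$ if and only if $l_j\approx j^{-1/D}$. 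I would simply cite the relevant lemma of \cite{LapPo93} here rather than reprove it. Thus all three statements in (a) are equivalent.

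For part (b): again (i)$\Leftrightarrow$(ii) is Theorem~\ref{thm:Mmeas} (with the additional assertion that the two contents agree). The equivalence (i)$\Leftrightarrow$(iii) is the hard classical result of \cite{LapPo93}: the forward direction is the nontrivial one and is where the original proof is most involved. The point I would stress is that with Theorem~\ref{thm:Mmeas} in hand one may prove instead (ii)$\Leftrightarrow$(iii), i.e.\ work with the S-content $\sS^D(F)$, which is more directly computable from the string. Concretely, $S_F(r)=\Ha^0(\bd F_r)$ counts boundary points of the parallel set: each complementary interval of length $l_j>2r$ contributes exactly two boundary points, so $S_F(r)=2\,\#\{j: l_j>2r\}+(\text{error from }F\text{ itself, which is }O(1))$. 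Hence $\sS^D(F)=\lim_{r\to0}\frac{2\,\#\{j:l_j>2r\}}{(1-D)\kappa_{1-D}r^{-D}}$, and this limit exists and is positive/finite precisely when $\#\{j:l_j>t\}\sim c\,t^{-D}$, which by a standard inversion (Karamata-type) is equivalent to $l_j\sim Lj^{-1/D}$ with $c=L^D$. Tracking the constant through $\#\{j:l_j>2r\}\sim L^D(2r)^{-D}$ gives $\sS^D(F)=\frac{2\,L^D(2r)^{-D}}{(1-D)\kappa_{1-D}r^{-D}}\big|_{\text{lim}}=\frac{2^{1-D}L^D}{(1-D)\kappa_{1-D}}$, which is formula~\eqref{eqn:Scont-exists}; the equality $\sM^D(F)=\sS^D(F)$ then comes for free from Theorem~\ref{thm:Mmeas}.

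The main obstacle is the inversion step: passing from the asymptotics of the counting function $N(t):=\#\{j:l_j>t\}$ to the asymptotics of the individual terms $l_j$, and vice versa, both for the $\approx$ (two-sided) version in part (a) and the $\sim$ (measurable) version in part (b). For the $\sim$ direction one needs that $N(t)\sim c\,t^{-D}$ implies $l_j\sim (c/j)^{1/D}$; this is elementary since $N$ is a nonincreasing step function and $l_j$ is essentially its generalized inverse, but one must handle the jumps of $N$ carefully (they do not affect the limit because $D<1$ forces $l_j/l_{j+1}\to1$). For the $\approx$ direction the analogous implication is cleaner. The rest is routine: the boundary-point count $S_F(r)=2N(2r)+O(1)$ is exact up to the finitely many boundary points of $F$ contributing a bounded error that is negligible against $r^{-D}\to\infty$, and the translation between $V_F(r)$ and $N$ is the Lapidus--Pomerance lemma which I would invoke rather than redo.
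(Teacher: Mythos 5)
Your proposal is correct and follows essentially the same route as the paper: (i)$\Leftrightarrow$(ii) in both parts is obtained from Theorems~\ref{thm:twoside} and \ref{thm:Mmeas}, and the link to the string goes through the exact boundary-point count $\Ha^0(\bd F_r)=2+2J(2r)$, with $J(2r)$ the number of complementary intervals of length at least $2r$; the only deviations are that in (a) you cite \cite{LapPo93} for (i)$\Leftrightarrow$(iii) (the paper cites it as well, but in addition gives a direct proof of (ii)$\Leftrightarrow$(iii)), and that in (b) you invert $N(t)\sim L^D t^{-D}$ into $l_j\sim L j^{-1/D}$ via the generalized inverse of the counting function, whereas the paper squeezes $\alpha_j=l_j j^{1/D}$ along the indices with $l_j>l_{j+1}$ --- the same elementary estimate in different wording. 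One harmless slip: the jumps of $N$ are not innocuous because ``$D<1$ forces $l_j/l_{j+1}\to 1$'' (the exponent alone forces nothing about the string); they are innocuous because $N(l_j)<j\le\#\{i:l_i\ge l_j\}$, which together with the assumed two-sided power bounds on $N$ (equivalently on $J$) already pins down $l_j j^{1/D}$ --- exactly what the paper's restriction to the subsequence $\{j:l_j>l_{j+1}\}$ accomplishes, so no repair of your argument is needed.
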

\begin{proof}
(a) The equivalence $(i)\Leftrightarrow(ii)$ is the case $d=1$ of Theorem~\ref{thm:twoside}. The equivalence $(i)\Leftrightarrow(iii)$ is part of Theorem~2.4 in \cite{LapPo93}. However, we give a simpler direct proof of the equivalence $(ii)\Leftrightarrow(iii)$, connecting the S-content directly to the asymptotics of the lengths $l_j$ in the associated fractal string $\sL$.

For a proof of the implication $(iii) \Rightarrow(ii)$, recall that $(iii)$ means there exist constants $c_1,c_2, j_0$ such that $c_1 < j \cdot l_j^D < c_2$ for all $j\ge j_0$. For easier comparison, we follow the notation in \cite{LapPo93} and let
\begin{align} \label{eqn:def-alpha-beta}
\alpha:=\liminf_{j\to\infty} l_j j^{1/D} \quad \text{ and } \quad \beta:=\limsup_{j\to\infty} l_j j^{1/D},
\end{align}
cf.~\cite[(3.5), p.48]{LapPo93}. Obviously, $0<\alpha\le\beta<\infty$. Furthermore, let $J(\eps):=\max\{j\in\N: l_j\ge\eps\}$, $\eps>0$.
For $2r\in(l_{j+1},l_j]$, we have $J(2r)=j$ and thus
\begin{align}\label{eqn:H0-J}
\Ha^0(\bd F_r)=2+ 2 J(2r)= 2+2j.
\end{align}
Hence
$$
2^{1-D} l_{j+1}^D (j+1)\le r^D \Ha^0(\bd F_r)\le 2^{1-D} l_j^D (j+1).
$$
We conclude that on the one hand
$$
\kappa_{1-D}\lsS^D(F)=\liminf_{r\searrow 0} \frac {\Ha^0(\bd F_r)}{(1-D) r^{-D}}\ge \frac{2^{1-D}}{1-D} \liminf_{j\to\infty} l_{j+1}^D (j+1)=\frac{2^{1-D}}{1-D} \alpha^D>0
$$
and, on the other hand
$$
\kappa_{1-D}\usS^D(F)=\limsup_{r\searrow 0} \frac {\Ha^0(\bd F_r)}{(1-D) r^{-D}}\le \frac{2^{1-D}}{1-D} \liminf_{j\to\infty} l_{j}^D j\cdot\frac {j+1}j=\frac{2^{1-D}}{1-D} \beta^D<\infty,
$$
cf.\ also \cite[cf.\ Proof of Theorem~3.1, equation (3.9)]{LapPo93}).

For a proof of the reverse implication $(ii)\Rightarrow(iii)$, we essentially employ the argument in \cite[Lemma~3.6]{LapPo93}, which gives the lower bound, and observe that a similar argument works for the upper bound: Letting $\alpha_j:=l_j j^{1/D}$, it is not difficult to see that $l_j=l_{j+1}$ implies $\alpha_j<\alpha_{j+1}$. This yields
\begin{align}\label{eqn:alpha-beta}
\alpha=\liminf_{j:l_j>l_{j+1}} \alpha_{j+1} \quad \text{ and, similarly,} \quad \beta=\limsup_{j:l_j>l_{j+1}} \alpha_j.
\end{align}
Suppose $(ii)$ holds. Then, in view of the relation $\Ha^0(\bd F_r)=2+ 2 J(2r)$, 
there exist positive constants $r_0,m,M$ such that $m\leq r^D J(r)\leq M$ for all $0<r\leq r_0$.
For $l_{j+1}<l_j$ and $r\in(l_{j+1}, l_{j}]$ one has $J(r)=j$ and thus $m\leq r^D j\leq M$, which implies
$$
m^{1/D} j^{-1/D}\leq r \leq M^{1/D} j^{-1/D}.
$$
Letting $r\to l_{j+1}$ in this equation, we get on the one hand $l_{j+1}\geq m^{1/D} j^{-1/D}>m^{1/D} (j+1)^{-1/D}$, which implies $\alpha_{j+1}>m^{1/D}$ for each $j$ with $l_j>l_{j+1}$. Thus
$$
\alpha=\liminf_{j:l_j>l_{j+1}} \alpha_{j+1} \ge m^{1/D}>0.
$$
On the other hand, we get for $r=l_j$, $l_j\leq M^{1/D} j^{-1/D}$ and so $\alpha_j\leq M^{1/D}$ for each $j$ with $l_j>l_{j+1}$. Hence
$$
\beta=\limsup_{j:l_j>l_{j+1}} \alpha_j\leq M^{1/D}.
$$

(b) The equivalence $(i)\Leftrightarrow(ii)$ is the case $d=1$ of Theorem~\ref{thm:Mmeas}. The equivalence $(i)\Leftrightarrow(iii)$ is proved in \cite[Theorems 3.1(b) and 4.1]{LapPo93}.  In particular, the latter Theorem has a long and technical proof. We prove the equivalence $(ii)\Leftrightarrow(iii)$ instead: The implication $(iii)\Rightarrow(ii)$ follows immediately from the proof of the same implication in (a) by setting $\alpha=\beta=L$.

For a proof of the reverse implication, we refine the argument of the corresponding proof in part (a). Assume
\begin{align} \label{eqn:Scont-exists}
\sS^D(F)=\frac{2^{1-D}}{\kappa_{1-D}}\frac{L^D}{1-D}
\end{align}
for some $L>0$, which implies that for each $\eps>0$ there exists $r_0>0$ such that
$$
L^D-\eps\leq 2^{D-1} r^D \Ha^0(\bd F_r)\leq L^D+\eps
$$
for $0<r\leq r_0$. It suffices to show $\alpha\geq L$ and $\beta\leq L$, where $\alpha$ and $\beta$ are as in \eqref{eqn:def-alpha-beta}.
Recalling that $\Ha^0(\bd F_r)=2+ 2 J(2r)$ and substituting $t=2r$, we infer
$$
L^D-\eps\le t^D (1+J(t))\le L^D+\eps
$$
for all $0<t\leq 2r_0$. Now, if $l_{j+1}<l_j$ and $t\in(l_{j+1},j_j]$, then $J(t)=j$ and so
\begin{align}\label{eqn:Scont-exists2}
(L^D-\eps)^{1/D}\leq t (1+j)^{1/D} \leq (L^D+\eps)^{1/D}.
\end{align}
Setting $t=l_j$ and taking into account \eqref{eqn:alpha-beta}, we conclude
$$
\limsup_{j\to\infty} l_j j^{1/D}\leq \limsup_{j:l_j>l_{j+1}} l_j (j+1)^{1/D}\leq (L^D+\eps)^{1/D}
$$
for each $\eps>0$ and thus, by letting $\eps\to 0$, $\beta\leq L$.
Similarly, by letting $t\to l_{j+1}$ in \eqref{eqn:Scont-exists2} and using again \ref{eqn:alpha-beta}, we get
$$
\liminf_{j\to\infty} l_j j^{1/D}=\liminf_{j:l_j>l_{j+1}} l_{j+1} (j+1)^{1/D}\geq (L^D-\eps)^{1/D}
$$
and so, by letting $\eps\to 0$, $\alpha>L$. This completes the proof of the implication $(ii)\Rightarrow(iii)$
in part (b) and thus of Theorem~\ref{thm:one-dim}.
\end{proof}

\begin{Remark}
\emph{
({\it Connection to the Modified Weyl-Berry (MWB) conjecture})
Let $\Omega=F^c\cap\conv(F)$ be the bounded open set consisting of the bounded complementary intervals of the compact set $F$. In \cite{LapPo93}, the Minkowski content is  connected to the following function $\delta:(0,\infty)\to (0,\infty)$, which describes the error if one tries to pack intervals of a fixed small length $l=1/x$ into the complementary intervals of $F$ (whose lengths are given by the associated fractal string $\sL=(l_j)_{j=1}^\infty$):
\begin{align}
\delta(x)=\sum_{j=1}^\infty l_j x -\sum_{j=1}^\infty [l_j x]=\sum_{j=1}^\infty \{l_j x\}.
\end{align}
Here $[z]$ and \{z\} denote the integer part and the fractional part of a number $z\in\R$, respectively.\\
According to \cite[Theorem 2.4]{LapPo93}, the following assertion is equivalent to the items $(i)-(iii)$ in part (a) of Theorem~\ref{thm:one-dim}:
$$
\delta(x)\approx x^D \text{ as } x\to\infty.
$$
Similarly, by \cite[Theorem~4.2]{LapPo93}, the assertions in part (b) of Theorem~\ref{thm:one-dim} imply that
\begin{align}\label{eqn:delta}
\delta(x)&\sim - \zeta(D)L^Dx^D, \text{ as } x\to\infty,
\end{align}
where $\zeta$ denotes the Riemann zeta-function. This relation is the  key to the proof of the MWB conjecture in dimension one in \cite{LapPo93},
which connects the geometry of the set $\Omega$ to its spectral properties (that is, to its sound).
Let $\lambda_1\leq \lambda_2\leq \ldots$ be the eigenvalues of the Dirichlet Laplacian $\Delta$ on $\Omega$ in increasing order and counted according to their multiplicities and let $N(\lambda):=\#\{k\geq 1: \lambda_i\le\lambda\}$ be the eigenvalue counting function of $\Delta$.
The MWB conjecture states that the second order asymptotic behavior of $N(\lambda)$ is governed by the Minkowski content of the boundary $F=\bd\Omega$ of $\Omega$. (The first order asymptotics is well known to be given by the so called \emph{Weyl term} $\varphi(\lambda)$ involving the volume of $\Omega$.) More precisely, for $d=1$, one has
$$
N(\lambda)=\varphi(\lambda)- c_{1,D}\sM^D(F)\lambda^{D/2} + o(\lambda^{D/2}) \text{ as } \lambda\to\infty,
$$
where $\varphi(\lambda)=\pi^{-1} V(\Omega)\lambda^{1/2}$ and $c_{1,D}=2^{D-1}\pi^{-D} \kappa_{1-D} (1-D)\zeta(D)$.
This is easily seen from \eqref{eqn:delta} and the relation
$$
\varphi(\lambda)-N(\lambda)=\sum_{j=1}^\infty l_jx -\sum_{j=1}^\infty [l_jx]=\delta(x)
$$
where $x=\sqrt{\lambda}/\pi$.
We refer to \cite{LapPo93} for more details on the resolution of the MWB conjecture in dimension one and to \cite{LapPo96} for its disproof in higher dimensions, see also \cite{LapvF06}.
Surprisingly, a certain converse of the implication above connecting \eqref{eqn:delta} to the assertions in Theorem~\ref{thm:one-dim}(b) is not true in the case $D=\frac12$ and it is true for any other value $D\in(0,1)$ if and only if the Riemann hypothesis is true, as derived by Lapidus and Maier in~\cite{LapMai91}.
}
\end{Remark}

\begin{Remark}
\emph{
({\it One sided bounds})
Under the hypothesis of Theorem~\ref{thm:one-dim}, also the following equivalence for the upper bounds holds regardless of any conditions on the lower bounds:
$$
(i)\quad \usM^D(F)<\infty \quad \Leftrightarrow \quad (ii)\quad \usS^D(F)<\infty \quad \Leftrightarrow \quad (iii) \quad \beta<\infty.
$$
Indeed, this is obvious from the proofs of Theorem~\ref{thm:one-dim}(a) and Proposition~\ref{lem:two-side}, for the equivalence of (i) and (iii) see also~\cite[Theorem~3.10]{LapPo93}. In the latter paper, it was also observed that the corresponding equivalence is not true for the lower bounds:
\begin{center}
$\alpha>0$ implies $\lsM^D(F)>0$ but not vice versa,
\end{center}
cf.~\cite[Theorem~3.11 and Example 3.13]{LapPo93}. A similar phenomenon was observed in \cite{w10} for the lower S-content:
\begin{center}
$\lsS^D(F)>0$ implies $\lsM^D(F)>0$ but not vice versa.
\end{center}
Fortunately, from the above proof of Theorem~\ref{thm:one-dim}(a), it is easily seen that at least the equivalence
\begin{align}
\lsS^D(F)>0 \quad \Leftrightarrow \quad \alpha>0
\end{align}
holds in general.
}
\end{Remark}

\begin{Remark}
\emph{
({\it Generalized contents of subsets of $\R$})
For the generalized contents a statement completely analogous to Theorem~\ref{thm:one-dim} can be derived by combining the results in Section~\ref{sec:general-gauge} with the results of He and Lapidus in \cite{LapHe1,LapHe2}, see in particular Theorems 2.4 and 2.6 in  \cite{LapHe1} (or Theorems 2.5 and 2.7 in \cite{LapHe2}), where corresponding equivalent assertions for the S-content can be added.
}
\end{Remark}

\end{document}